\documentclass[10pt,a4paper,preprint]{elsarticle}
\usepackage{amsmath}
\usepackage{amsthm}
\usepackage{url}
\newtheorem{thm}{Theorem}
\newtheorem{prop}{Property}
\newtheorem{define}{Definition}

\journal{Linear Algebra and its Applications}

\begin{document}

\begin{frontmatter}

\title{Minimum Vertex Covers and the Spectrum of the Normalized Laplacian on Trees}
\author[FUB]{Hao Chen\fnref{hao}\corref{cor1}}
\ead{haochen@math.fu-berlin.de}
\address[FUB]{Freie Universit\"at Berlin, Institut of Mathematics, Berlin, Germany}
\fntext[hao]{Author was a master student of Ecole Polytechnique (Palaiseau, France) when doing this work at Max-Planck-Institut f\"ur Mathematik in den Naturwissenschaften (Leipzig, Germany) as an internship.}
\cortext[cor1]{Corresponding author.}

\author[MiS,SFI]{J\"urgen Jost}
\ead{jost@mis.mpg.de}
\address[MiS]{Max Planck Institute for Mathematics in the Sciences, Leipzig, Germany}
\address[SFI]{Santa Fe Institute, Santa Fe, USA}
\begin{abstract}

We show that, in the graph spectrum of the normalized graph Laplacian on trees, the eigenvalue 1 and eigenvalues near 1 are strongly related to minimum vertex covers.

In particular, for the eigenvalue 1, its multiplicity is related to the size of a minimum vertex cover,
and zero entries of its eigenvectors correspond to vertices in minimum vertex covers;
while for eigenvalues near 1, their distance to 1 can be estimated from minimum vertex covers; and for the largest eigenvalue smaller than 1, the sign graphs of its eigenvectors take vertices in a minimum vertex cover as representatives.
\end{abstract}

\begin{keyword}
Tree \sep Graph Laplacian \sep Graph spectrum \sep Minimum vertex cover \sep Eigenvalue 1 \sep Sign graph \sep Normalized Laplacian
\MSC[2010] 05C50 \sep 05C70
\end{keyword}

\end{frontmatter}

\section{Introduction}

Spectral graph theory tries to deduce information about graphs from the graph spectrum. For example, from the spectrum of the normalized Laplacian that we will study in this paper, one can obtain the number of connected components from the multiplicity of the eigenvalue 0, the bipartiteness from its largest eigenvalue (which is at most 2), as well as the connectivity (how difficult it is to divide a connected graph into two parts) from its second smallest eigenvalue.

Since the normalized Laplacian contains information of random processes on graphs, the eigenvalue 1 of the normalized Laplacian is realized to be important, and a very high multiplicity of 1 is often observed \cite{Banerjee2008a,Banerjee2008b}. Some interpretations of this high multiplicity have been proposed \cite{Banerjee2008b,Jost2006}.

In this paper, we shall explore a new relationship between the structure of a graph and the eigenvalue 1. We will show that, for trees, the eigenvalue 1 and eigenvalues near 1 are related to the minimum vertex cover problem, a classical optimization problem in graph theory. 

More specifically, minimum vertex covers can be used to calculate the multiplicity of eigenvalue 1, and are included by the zero entries of eigenvectors of 1. One can also use minimum vertex covers to estimate the shortest distance between the eigenvalue 1 and the other eigenvalues. Furthermore, for eigenvectors of the largest eigenvalue smaller than 1, vertices in a minimum vertex cover play the role of representatives for the sign graphs. 
A spectral property is therefore linked to a combinatorial problem on graphs.

\section{Brief introduction to the normalized Laplacian}

We will study undirected simple graphs $G=(V,E)$. An edge connecting two vertices $u,v\in V$ is denoted by $uv$. If $uv\in E$, we say that $u$ is a neighbor of $v$ and write $u\sim v$. The degree of a vertex $v$ will be denoted by $\deg{v}$.

The normalized Laplacian, which maps $\mathcal{F}$, the set of real valued functions of $V$, into itself, is a discrete version of the Laplacian in continuous space. Let $f\in\mathcal{F}$, and $u$ be a vertex in a graph $G$, then the normalized Laplacian operator is defined by
\begin{equation*}
\mathcal{L}f(u)=f(u)-\frac{1}{\deg{u}}\sum_{v\sim u}f(v).
\end{equation*}
It can be represented in matrix form by
\begin{equation*}
\mathcal{L}(u,v)=\begin{cases}
             1 & \text{if}\ u=v\\
             -\frac{1}{\deg{u}} & \text{if}\ u\sim v\\
             0 & \text{otherwise}
             \end{cases}
\end{equation*}
Here we have assumed that there is no isolated vertex, i.e. all the vertices have a positive degree.

We have the following immediate results about the spectrum of a normalized Laplacian: The normalized Laplacian is positive and similar to a symmetric linear operator, therefore its eigenvalues are real and positive, and are within the interval $[0,2]$. We can label the eigenvalues in non-decreasing order as $0=\lambda_1\le\ldots\le\lambda_n\le 2$, where $n=|V|$ is the number of vertices (same below). $\lambda_1=0$ is always the smallest eigenvalue, whose eigenvectors are locally constant functions, so its multiplicity is the number of connected components of the graph. For a bipartite graph $G=(V_1\sqcup V_2,E)$, if $\lambda$ is in the spectrum, so is $2-\lambda$. Therefore, for a connected graph, the largest eigenvalue $\lambda_n$ indicates the bipartiteness. It equals $2$ if the graph is bipartite, smaller otherwise. A discrete version of Cheeger's inequality 
\begin{equation*}
\frac{h^2}{2}\le\lambda_2\le 2h
\end{equation*}
is a famous result in spectral graph theory. Here $h$ is the discrete Cheeger's constant indicating how difficult it is to divide a graph into two parts. If the graph is already composed of 2 unconnected parts, $\lambda_2=0$. 

Detailed proofs of these results can be found in \cite{Chung1997} and \cite{Grigoryan2009}.

Since a tree is a connected bipartite graph, we know from the above results that its spectrum is symmetric with respect to 1, and that $0$ and $2$ are simple eigenvalues, and respectively the smallest and the largest eigenvalue. Since a deletion of any edge of a tree will divide a tree into two parts, the discrete Cheeger's constant is at most $2/n$, and the second smallest eigenvalue can be estimated by the discrete Cheeger's inequality.

\section{Minimum vertex cover and some of its properties}

By ``deleting a vertex $v$ from the graph $G=(V,E)$'', we mean deleting the vertex $v$ from $V$ and all the edges adjacent to $v$ from $E$, and write $G-v$. By ``deleting a vertex set $\Omega$ from the graph $G$'', we mean deleting all the elements of $\Omega$ from $G$, and write $G-\Omega$.

\begin{define}[vertex cover]
For a graph $G=(V,E)$, a \emph{vertex cover} of $G$ is a set of vertices $C\subset V$ such that $\forall e\in E, e\cap C\ne\emptyset$, i.e. every edge of $G$ is incident to at least one vertex in $C$. A \emph{minimum vertex cover} is a vertex cover $C$ such that no other vertex cover is smaller in size than $C$.
\end{define}

The minimum vertex cover problem is a classical NP-hard optimization problem that has an approximation algorithm. The following property is obvious, and will be very useful.

\begin{prop}
A vertex set is a vertex cover if and only if its complement is an independent set, i.e. a vertex set such that no two of its elements are adjacent. 
\end{prop}

So the minimum vertex cover problem is equivalent to the maximum independent set problem. For bipartite graphs, K\"onig's famous theorem relates the minimum vertex cover problem to the maximum matching problem, another classical optimization problem.

\begin{define}[matching]
For a graph $G=(V,E)$, a \emph{matching} of $G$ is a subgraph $M$ such that $\forall v\in M, \deg{v}=1$, i.e. every vertex in $m$ has one and only one neighbor in $M$. It can also be defined by a set of disjoint edges. A \emph{maximum matching} is a matching $M$ such that no other matching of $G$ has more vertices than $M$.
\end{define}

\begin{prop}[K\"onig's Theorem]
In a bipartite graph, the number of edges in a maximum matching equals the number of vertices in a minimum vertex cover.
\end{prop}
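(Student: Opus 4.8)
The plan is to prove the two inequalities separately. Write $G=(V_1\sqcup V_2,E)$ for the bipartition and let $M$ be a \emph{maximum} matching; I will manufacture a vertex cover whose size is exactly $|M|$. The easy half, which I would dispatch first, is the weak-duality bound valid for \emph{every} graph: since the edges of a matching are pairwise disjoint, any vertex cover must contain at least one endpoint from each of them and these endpoints are distinct, so $|M|\le|C|$ for every vertex cover $C$. This does not use bipartiteness and is the routine direction.

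The substance is the reverse inequality, and here the bipartite structure enters through alternating paths. Call a vertex \emph{exposed} if no edge of $M$ meets it, and consider the paths that start at an exposed vertex of $V_1$ and use edges alternately outside and inside $M$. Let $Z$ be the set of all vertices reachable from exposed $V_1$-vertices along such paths, and set
\[
C=(V_1\setminus Z)\cup(V_2\cap Z).
\]
The parity bookkeeping of the alternating paths is the point to get right: a $V_1$-vertex other than the start is entered via a matching edge, while a $V_2$-vertex is entered via a non-matching edge.

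I would then check two things. First, $C$ is a vertex cover: for an edge $uv$ with $u\in V_1$, if $u\notin Z$ the edge is covered by $u$; if $u\in Z$ then extending the alternating path one more step (along $uv$ when $uv\notin M$, or noting that $u$'s matching partner already lies on the path when $uv\in M$) forces $v\in Z$, so the edge is covered by $v\in V_2\cap Z$. Second, $|C|=|M|$: every vertex of $C$ is matched — the exposed $V_1$-vertices all lie in $Z$, so none sits in $V_1\setminus Z$, and an exposed $V_2$-vertex lying in $Z$ would complete an augmenting path, which \emph{maximality of $M$} forbids — and no edge of $M$ has both endpoints in $C$, since its $V_2$-endpoint lying in $Z$ would drag its $V_1$-endpoint into $Z$. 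Hence sending each vertex of $C$ to the matching edge that covers it is injective, giving $|C|\le|M|$. Combining with weak duality yields $|C|=|M|$, and because every cover has size at least $|M|$, this $C$ is minimum; as $M$ is maximum, the two extremal quantities coincide.

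The main obstacle is exactly the second verification: both claims hinge on $M$ being maximum, i.e. on the absence of an augmenting path from an exposed $V_1$-vertex to an exposed $V_2$-vertex, and the argument only goes through once the alternating-path parities are tracked correctly. If that bookkeeping proved awkward to present, the fallback I would keep in reserve is to model $G$ as a unit-capacity $s$--$t$ flow network and invoke the max-flow min-cut theorem, reading off König's identity by matching minimum cuts with vertex covers; this replaces the path analysis with network-flow machinery but proves the same equality.
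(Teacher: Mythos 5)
Your proposal is correct, but note that the paper itself offers no proof of this statement: it is quoted as K\"onig's classical theorem and stated as a background \texttt{Property}, with the minimum-vertex-cover/maximum-matching duality simply imported as known. So there is no ``paper proof'' to compare against; what you have written is the standard alternating-path proof found in textbooks, and it is sound. The weak-duality half ($|M|\le|C|$ for every matching and every cover) is handled correctly, and the hard direction is carried out with the right bookkeeping: defining $Z$ as the set of vertices reachable by alternating paths from exposed $V_1$-vertices, taking $C=(V_1\setminus Z)\cup(V_2\cap Z)$, checking coverage by the two cases $uv\notin M$ (extend the path) and $uv\in M$ (the partner precedes $u$ on the path), and checking $|C|\le|M|$ via the two facts that every vertex of $C$ is matched (an exposed vertex of $V_2\cap Z$ would yield an augmenting path, contradicting maximality of $M$) and that no matching edge has both endpoints in $C$. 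Both of these latter facts are exactly where maximality enters, and you identified that correctly. The max-flow min-cut fallback you mention is also a legitimate alternative route. In the context of the paper, the only role this statement plays is to convert between $|V_{\tilde M}|$ and $2|\tilde C|$ in Theorem \ref{mul1} and to justify $|C|\le\left[\frac{n}{2}\right]$ in Theorem \ref{0pt1}, so either proof (or the citation the authors chose) serves equally well.
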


It should be noticed that, in general, neither a minimum vertex cover nor a maximum matching is unique. We show in Figure 1 a very simple graph, where the two white vertices form a minimum vertex cover. The meaning of the color and the size of vertices will be explained later. 

\begin{figure}[ht]
  \centering
  \includegraphics[scale=0.25]{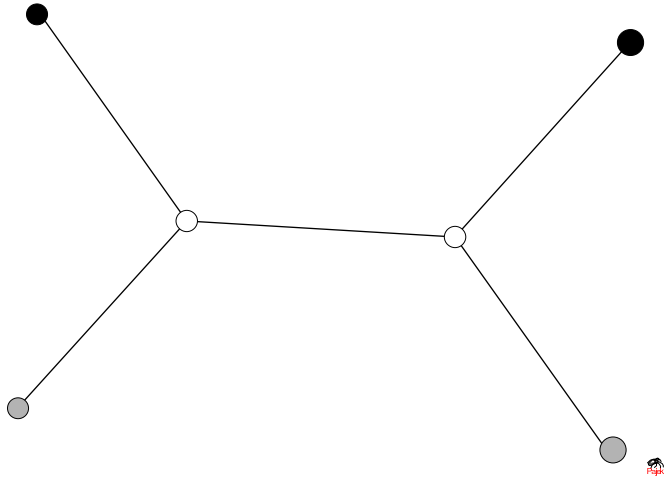}
  \caption{A simple graph. White vertices yield a minimum vertex cover.}
\end{figure}

We now prove the following properties of a minimum vertex cover, which will be useful for the proofs below.

\begin{prop}\label{PDeletion1}
Let $C$ be a minimum vertex cover of $G$. Then for any subset $C'\subset C$, $C-C'$ is a minimum vertex cover of $G-C'$.
\end{prop}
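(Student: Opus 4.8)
The plan is to verify the two defining requirements of a minimum vertex cover separately: first that $C-C'$ covers every edge of $G-C'$, and then that no strictly smaller vertex cover of $G-C'$ can exist. Throughout I would lean on Property~1 only implicitly; the direct counting argument is cleaner here than passing through independent sets.

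For the covering property I would take an arbitrary edge $e$ of $G-C'$ and argue directly. Since $G-C'$ is obtained from $G$ by deleting the vertices in $C'$ together with their incident edges, $e$ is also an edge of $G$ and neither of its endpoints lies in $C'$. Because $C$ is a vertex cover of $G$, at least one endpoint of $e$ belongs to $C$; as this endpoint avoids $C'$, it lies in $C-C'$. Hence every edge of $G-C'$ is incident to $C-C'$, so $C-C'$ is a vertex cover of $G-C'$.

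For minimality I would argue by contradiction. Suppose $D$ is a vertex cover of $G-C'$ with $|D|<|C-C'|$. The key observation is that $D\cup C'$ is then a vertex cover of the whole graph $G$: any edge of $G$ incident to $C'$ is covered by $C'$, while any edge of $G$ not incident to $C'$ survives in $G-C'$ and is therefore covered by $D$. Since $D$ consists of vertices of $G-C'$, it is disjoint from $C'$, so $|D\cup C'|=|D|+|C'|$. Using $C'\subset C$ to write $|C-C'|=|C|-|C'|$, the assumed inequality gives $|D\cup C'|=|D|+|C'|<|C-C'|+|C'|=|C|$, contradicting the minimality of $C$ in $G$. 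This forces $|D|\ge|C-C'|$, so $C-C'$ is minimum.

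The argument is short, and the only point that needs care is the bookkeeping with cardinalities: using that $D$ and $C'$ are disjoint and that $|C-C'|=|C|-|C'|$ because $C'\subset C$. I expect no genuine obstacle here; the one case worth checking explicitly is the degenerate $C'=C$, where $C-C'=\emptyset$ and $G-C'$ has no edges at all (every edge of $G$ being incident to the cover $C$), so the empty set is vacuously its minimum vertex cover, consistent with the statement.
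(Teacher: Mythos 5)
Your proof is correct and follows essentially the same route as the paper's: the core step in both is that a smaller vertex cover $D$ of $G-C'$ would make $D\cup C'$ a vertex cover of $G$ of size less than $|C|$, contradicting minimality. You additionally verify explicitly that $C-C'$ covers every edge of $G-C'$ and handle the cardinality bookkeeping and the degenerate case $C'=C$, points the paper leaves implicit.
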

\begin{proof}
If $C-C'$ is not a minimum vertex cover of $G-C'$, there is a smaller vertex cover $C''$ of $G-C'$. Then $C''\cup C'$ covers every edge of $G$, and is smaller than $C$ in size. So $C$ is not a minimum vertex cover of $G$, contrary to the assumption. 
\end{proof}

\begin{prop}\label{PLeaves}
For a tree, let $L$ be the set of its leaves (vertices of degree 1). Then $L$ is not a subset of any minimum vertex cover.
\end{prop}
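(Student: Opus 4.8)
The plan is to argue by contradiction combined with induction on $n=|V|$, isolating a convenient pendant vertex by means of a longest path. Suppose, toward a contradiction, that some minimum vertex cover $C$ satisfies $L\subseteq C$, and let $x_0x_1\cdots x_k$ be a longest path in the tree. Then $x_0$ is necessarily a leaf, and its unique neighbour $x_1$ is the corresponding stem. The maximality of the path forces every neighbour of $x_1$ other than $x_2$ to be a leaf, for otherwise the path could be prolonged past $x_1$. This structural observation is precisely what will keep the induction inside the class of trees rather than forcing me into forests.

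First I would dispose of the easy configurations by local exchange arguments. If $x_1\in C$, then $x_0$ is superfluous, since its only incident edge $x_0x_1$ is already covered by $x_1$; hence $C\setminus\{x_0\}$ is a strictly smaller cover, a contradiction. If $x_1\notin C$ but $x_1$ has two or more leaf neighbours (all of which lie in $C$ because $L\subseteq C$), then replacing all of them by the single vertex $x_1$ yields a strictly smaller cover, again a contradiction. What survives is the delicate case: $x_1\notin C$, the vertex $x_1$ has exactly one leaf neighbour (namely $x_0$), so by path-maximality $\deg x_1=2$ with neighbours $x_0,x_2$, and moreover $x_2\in C$ because the edge $x_1x_2$ must be covered.

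For this remaining case I would pass to the smaller tree $T^-=T-\{x_0,x_1\}$ obtained by deleting the pendant path of length two; deleting two successive leaves keeps the object a tree. Since $x_0\in C$ and $x_1\notin C$, the set $C\setminus\{x_0\}$ has size $|C|-1$ and, by reasoning analogous to Property \ref{PDeletion1}, is a \emph{minimum} vertex cover of $T^-$ (any smaller cover $D$ of $T^-$ would make $D\cup\{x_1\}$ a cover of $T$ of size below $|C|$). The induction hypothesis applied to $T^-$ then produces a leaf $z$ of $T^-$ lying outside $C\setminus\{x_0\}$.

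The step I expect to be the main obstacle is exactly the delicate case above, and the reason is that the claim concerns \emph{every} minimum cover: a single exchange only ever produces \emph{another} minimum cover (this is the borderline situation in which each stem carries exactly one leaf), so no one-step swap can close the argument. The induction circumvents this, but it requires one genuine check, which is the crux: the leaf $z$ it returns must be a leaf of the original tree, not the vertex $x_2$, which becomes a newly created leaf of $T^-$. Here the earlier bookkeeping pays off: deleting $x_1$ lowers only the degree of $x_2$, and $x_2\in C$, so $x_2$ belongs to $C\setminus\{x_0\}$ and cannot be the excluded leaf $z$; every other vertex retains its degree, so $z$ is a leaf of $T$ with $z\notin C$, contradicting $L\subseteq C$. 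The base cases $n=2$ and $n=3$, where $|L|$ already exceeds the cover number, are immediate.
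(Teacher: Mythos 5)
Your proof is correct, and it takes a genuinely different route from the paper's. The paper also argues by induction with exchange arguments, but its induction step is global: it shows that under the assumption $L\subseteq C$ the parent set $P$ of the leaves is disjoint from $C$, that the leaf-to-parent map is injective, swaps $L$ for $P$ to obtain another minimum cover, and then deletes all of $P$ at once (invoking Property \ref{PDeletion1}) to apply the induction hypothesis to $T-P$. Your step is local: you use a longest path to isolate one pendant configuration, dispose of two cases by one-step swaps, and in the remaining case delete the pendant path $x_0,x_1$ of length two. Each approach buys something. The paper's simultaneous deletion makes the exchange structure ($L$ versus $P$) very transparent, but $T-P$ need not be a tree --- it can be a forest, even one with isolated vertices --- so strictly speaking the induction hypothesis, stated for trees, does not apply verbatim and needs a per-component (or forest-level) restatement; the paper glosses over this. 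Your deletion keeps the object a tree by construction, which is exactly the point you flag, so your induction is self-contained. You also correctly identify and close the one real subtlety of your decomposition: the induction hypothesis on $T^-$ could a priori return the newly created leaf $x_2$, and you rule this out precisely because $x_2\in C$ was forced earlier (to cover $x_1x_2$ with $x_1\notin C$). Two minor remarks: your inline minimality argument for $C\setminus\{x_0\}$ on $T^-$ is necessary and correct --- Property \ref{PDeletion1} alone does not suffice, since $x_1\notin C$ and that property only governs deletion of vertices belonging to the cover; and your observation that no one-step swap can settle the delicate case (because swaps only produce other minimum covers) is exactly why both you and the paper are driven to induction rather than a pure exchange argument.
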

\begin{proof}
We'll argue by induction. The property is obviously true for a tree with less than 3 vertices. Suppose it's true for all trees with less than $n$ vertices. Now consider a tree $T$ of $n$ vertices.

Let $P$ be the set of parents (the only neighbors) of $L$. Assume a minimum vertex cover $C$ such that $L\subset C$. 

We have $P\cap C=\emptyset$. In fact, otherwise, consider $v\in P\cap C$, its child can be deleted from $C$, and the remaining vertex set is still a vertex cover, so $C$ is not a minimum cover as assumed.

Let $p$ be a mapping from $L$ to $P$ that maps a leaf to its only parent. We have $u\ne v\Rightarrow p(u)\ne p(v)$. In fact, otherwise, we can replace $u,v$ by their common parent in $C$, and the resulting vertex set is a vertex cover smaller than $C$, and the assumption is again violated.

So $p$ is a one-to-one map as long as the assumptions are true. We can replace $L$ by $P$ in $C$, and the resulting vertex set $C'$ is a vertex cover of size $|C|$, hence another minimum vertex cover. By the previous property, $C'-P$ is a minimum vertex cover of $T-P$. The leaves of $T-P$ are the grand parents of $L$.

By assumptions, there is at least one leaf $v'$ of $T-P$ that is not in $C'-P$, thus not in $C$. Neither is its child, because $P\cap C=\emptyset$. So the edge connecting $v'$ and his child is not covered by $C$. This violates the requirement that $C$ is a vertex cover. So the assumption that $L\subset C$ cannot be true for $T$.

By induction, $L\subset C$ is false for every tree.
\end{proof}

\begin{prop}\label{PExpand}
Let $C$ be a minimum vertex cover and consider a subset $C'\subset C$. Let $G'$ be the subgraph expanded by $C'$, that is, the vertex set of $G'$ consists of all the elements of $C'$ and all their neighbors, and the edge set of $G'$ consists of all the edges in $G$ that are adjacent to the elements of $C'$. Then $C'$ is a minimum vertex cover of $G'$.
\end{prop}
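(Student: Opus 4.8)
The plan is to mimic the replacement argument already used for Property \ref{PDeletion1}: first observe that $C'$ does cover $G'$, and then establish minimality by a swap that lifts any hypothetically smaller cover of $G'$ back to a cover of the whole graph $G$, contradicting the minimality of $C$.

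That $C'$ is a vertex cover of $G'$ is immediate from the construction. By definition the edge set of $G'$ consists precisely of the edges of $G$ that are incident to some element of $C'$, so every edge of $G'$ meets $C'$. The content of the statement is therefore entirely in the word \emph{minimum}.

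For minimality I would argue by contradiction. Suppose $G'$ admits a vertex cover $D$ with $|D| < |C'|$. The candidate replacement is $\tilde{C} := (C \setminus C') \cup D$, obtained by substituting $D$ for $C'$ in $C$. A size count gives $|\tilde{C}| \le |C \setminus C'| + |D| = (|C| - |C'|) + |D| < |C|$, so $\tilde{C}$ is strictly smaller than $C$. It then remains to verify that $\tilde{C}$ covers every edge of $G$, which I would do by splitting on whether an edge $e = uv$ belongs to $G'$. If $e \in E(G')$, then $e$ is covered by $D \subset \tilde{C}$, since $D$ is a vertex cover of $G'$. If $e \notin E(G')$, then --- and this is the point on which the whole argument turns --- neither endpoint of $e$ lies in $C'$, for otherwise $e$ would be one of the edges incident to $C'$ and hence an edge of $G'$; since $C$ covers $e$, some endpoint of $e$ lies in $C$, and that endpoint therefore lies in $C \setminus C' \subset \tilde{C}$. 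In both cases $e$ meets $\tilde{C}$, so $\tilde{C}$ is a vertex cover of $G$ strictly smaller than $C$, contradicting the minimality of $C$.

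The main obstacle, and the place where the precise definition of $G'$ is essential, is the second case: one must be certain that swapping out the vertices of $C'$ does not strand some edge of $G$ uncovered. This is exactly guaranteed by the requirement that $G'$ contain \emph{all} edges adjacent to $C'$ (not merely the edges internal to the expanded vertex set), so that every edge of $G$ lying outside $G'$ is automatically disjoint from $C'$ and thus still covered by the untouched part $C \setminus C'$ of the original cover. Once this observation is in place the remainder is routine counting; note moreover that the argument never uses that $G$ is a tree, so the statement in fact holds for arbitrary graphs.
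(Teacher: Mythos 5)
Your proof is correct and follows essentially the same route as the paper: the paper's own proof is a one-line sketch ("if $C'$ is not a minimum vertex cover of $G'$, we can find a vertex cover of $G$ smaller than $C$", by analogy with Property~\ref{PDeletion1}), and your argument with $\tilde{C}=(C\setminus C')\cup D$ is precisely that replacement, with the key verification --- that any edge of $G$ outside $G'$ misses $C'$ and is therefore still covered by $C\setminus C'$ --- spelled out in full.
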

\begin{proof}
As in the proof of Property \ref{PDeletion1}, if $C'$ is not a minimum vertex cover of $G'$, we can find a vertex cover of $G$ smaller then $C$, thus violate the assumption.
\end{proof}

%\begin{prop}\label{PDivision}
%Let $V_1, V_2$ be a partitioning of $V$, and $C, C_1, C_2$ be respectively minimum vertex covers of $G, G-V_2, G-V_1$. Then $|C|\ge|C_1|+|C_2|$.
%\end{prop}
%\begin{proof}
%If $|C|<|C_1|+|C_2|$, either $C\cap V_1$ be a vertex cover smaller than $C_1$, or $C\cap V_2$ be a vertex cover smaller than $C_2$, in any case we can construct a vertex cover of $G$ smaller than $C$, and the assumption will be violated.
%\end{proof}

\begin{prop}\label{PDeletion2}
Let $z$ be a vertex excluded by every minimum vertex cover, then a minimum vertex cover $C$ of $G$ is also a minimum vertex cover of $G-z$.
\end{prop}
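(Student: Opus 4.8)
The plan is to establish the two defining conditions of a minimum vertex cover of $G-z$ separately: first that $C$ is a vertex cover of $G-z$ at all, and then that it has the smallest possible size. The first is immediate. Since $z$ lies in no minimum vertex cover, in particular $z\notin C$, so $C$ is a set of vertices of $G-z$; moreover every edge of $G-z$ is also an edge of $G$ and is therefore covered by $C$. This already shows that the size of a minimum vertex cover of $G-z$ is at most $|C|$.

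For minimality I would argue by contradiction. Suppose $G-z$ admitted a vertex cover $D$ with $|D|<|C|$. The only edges of $G$ absent from $G-z$ are those incident to $z$, and each of them is covered by the single vertex $z$; hence $D\cup\{z\}$ is a vertex cover of $G$ of size $|D|+1\le|C|$. As $|C|$ is minimum for $G$, these inequalities must all be equalities, so $D\cup\{z\}$ is itself a minimum vertex cover of $G$ containing $z$. This contradicts the hypothesis that $z$ is excluded by every minimum vertex cover. Therefore no such $D$ exists, and $C$, being a vertex cover of $G-z$ of the least possible size, is a minimum vertex cover of $G-z$.

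The crux is the minimality step, since that is where the hypothesis on $z$ must actually be used. Merely restricting $C$ yields only the easy inequality (the cover number cannot increase when a vertex is deleted); the reverse direction --- that deleting $z$ does \emph{not} strictly decrease the cover number --- is the real content, and the assumption on $z$ enters precisely through the ``add $z$ back'' construction, which would otherwise manufacture a minimum cover of $G$ illegally containing $z$. I expect this to be the only delicate point, with everything else amounting to routine bookkeeping about which edges survive the deletion. One could alternatively route the minimality argument through K\"onig's theorem by comparing maximum matchings of $G$ and $G-z$, but the direct counting argument above seems cleaner and avoids invoking the matching structure.
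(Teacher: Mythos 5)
Your proof is correct and follows essentially the same route as the paper's: assume a smaller vertex cover $D$ of $G-z$ and add $z$ back to manufacture a minimum vertex cover of $G$ containing $z$, contradicting the hypothesis on $z$. The paper additionally splits into cases according to whether all neighbors of $z$ lie in the smaller cover; your uniform ``add $z$ back'' argument absorbs both cases at once and is, if anything, slightly cleaner.
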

\begin{proof}
Consider a minimum vertex cover $C$. Let $N$ be the set of neighbors of $z$. Obviously, $N\subset C$. Assume a vertex cover $C'$ of $G-z$ smaller than $C$. If $N\subset C'$, $C'$ covers also all the edges of $G$, thus is a vertex cover of $G$ smaller than $C$, which is absurd. If $N\setminus C'$ is not empty, $C'\cup\{v\}$ is a vertex cover of $G$ of a size $\le|C|$, thus another minimum vertex cover $G$, which violates the assumption that $z$ is excluded by any minimum vertex cover of $G$.
\end{proof}

\section{Minimum Vertex Covers and Eigenvalue 1}
\subsection{Multiplicity of Eigenvalue 1}

In this part, we will show how to obtain the size of a minimum vertex cover of a tree from the multiplicity of the eigenvalue 1 of the normalized Laplacian. The vertices are labeled by integers $1,\ldots,n$ in non-decreasing order. We try to write out the characteristic polynomial of the normalized Laplacian matrix $\det(\mathcal{L}-xI)$. 

We use the expansion
\begin{equation*}
\det(A)=\sum_{\sigma\in S_n}\text{sgn}(\sigma)\prod_{i=1}^n A_{i,\sigma(i)}
\end{equation*}
where the sum is over all the permutations of vertices. Every permutation can be decomposed into disjoint cycles. A $k$-cycle with $k>2$ corresponds to a simple directed $k$-cycle in the graph, and a 2-cycle corresponds to an edge in the graph. Since the $(u,v)$ entry of $\mathcal{L}-xI$ is not zero if and only if there is an edge between $u$ and $v$, we conclude that a term in the summation above is not zero if and only if the permutation corresponds to a disjoint set (i.e. without common vertex) of cycles and edges in the graph. This is a fact noticed by many authors \cite[for example]{Mowshowitz1972}.

A tree is a graph without cycles, so every term of the characteristic polynomial corresponds to a set of disjoint edges, i.e. a matching. For a tree $T$, its characteristic polynomial of the normalized Laplacian can be written as
\begin{equation*}
P(x)=\sum_{M\in\mathcal{M}}\left((-1)^{|E_M|}(x-1)^{n-|V_M|}\prod_{v\in
M}\frac{1}{\deg{v}}\right) 
\end{equation*}
where $\mathcal{M}$ is the set of matchings $M=(E_M,V_M)$ of $T$. We see from this polynomial that the multiplicity of 1 is at least $\min_{\mathcal{M}}(n-|V_M|)$, or $n-|V_{\tilde{M}}|$, where $\tilde{M}$ is a maximum matching.

The characteristic polynomial can be further written as (with the convention that $0^0=1$)
\begin{eqnarray*}
P(x)&=&(x-1)^{n-|V_{\tilde{M}}|}\sum_{\mathcal{M}}\left((-1)^{|E_M|}(x-1)^{|V_{\tilde{M}}|-|V_M|}\prod_{v\in
M}\frac{1}{\deg{v}}\right)\\
&=&(x-1)^{n-2|V_{\tilde{C}}|}\sum_{\mathcal{M}}\left((-1)^{|E_M|}(x-1)^{2|V_{\tilde{C}}|-|V_M|}\prod_{v\in
M}\frac{1}{\deg{v}}\right)
\end{eqnarray*}
where $\tilde{C}$ is a minimum vertex cover. 

We see from this polynomial that, as long as the edge set is not empty, there will always be a matching, therefore the constant term of the sum will never vanish at $x=1$. So we have proved the following result:

\begin{thm}\label{mul1}
For a tree with a maximum matching $\tilde{M}$ and a minimum vertex cover $\tilde{C}$, the multiplicity of the eigenvalue 1 is exactly $n-|V_{\tilde{M}}|=n-2|V_{\tilde{C}}|$, i.e. the number of vertices unmatched by the maximum matching $\tilde{M}$.
\end{thm}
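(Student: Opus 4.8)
The plan is to identify the multiplicity of the eigenvalue $1$ with the order of vanishing of the characteristic polynomial $P(x)$ at $x=1$, and to compute that order from the matching expansion already in hand. Since $P(x)=\sum_{M\in\mathcal{M}}(-1)^{|E_M|}(x-1)^{n-|V_M|}\prod_{v\in M}\frac{1}{\deg{v}}$, the exponent $n-|V_M|$ is smallest exactly for the matchings with the most vertices, i.e.\ the maximum matchings, where it equals $n-|V_{\tilde M}|$. First I would factor this minimal power out and write $P(x)=(x-1)^{n-|V_{\tilde M}|}Q(x)$ with $Q(x)=\sum_{M\in\mathcal{M}}(-1)^{|E_M|}(x-1)^{|V_{\tilde M}|-|V_M|}\prod_{v\in M}\frac{1}{\deg{v}}$, so that the multiplicity is \emph{exactly} $n-|V_{\tilde M}|$ precisely when $Q(1)\neq 0$.

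Next I would evaluate $Q$ at $x=1$. Every term indexed by a non-maximum matching carries a strictly positive power of $(x-1)$ and vanishes at $x=1$, so only the maximum matchings, those with $|V_M|=|V_{\tilde M}|$, contribute. This leaves $Q(1)=\sum_{M}(-1)^{|E_M|}\prod_{v\in M}\frac{1}{\deg{v}}$, the sum ranging over maximum matchings, an index set that is nonempty whenever the tree has an edge; since isolated vertices are excluded, a single-edge matching always exists and the reduction is legitimate.

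The decisive step, which I expect to be the only genuine obstacle, is to rule out cancellation among the surviving terms. Here I would use that, by the very definition of a maximum matching, all of them share the same vertex count and hence the same edge count $|E_{\tilde M}|$, so every surviving term carries the identical sign $(-1)^{|E_{\tilde M}|}$, while each weight $\prod_{v\in M}\frac{1}{\deg{v}}$ is strictly positive. The terms therefore add without cancellation, giving $Q(1)=(-1)^{|E_{\tilde M}|}\sum_{M}\prod_{v\in M}\frac{1}{\deg{v}}\neq 0$ and hence multiplicity exactly $n-|V_{\tilde M}|$. To finish, I would invoke K\"onig's Theorem on the bipartite tree, which gives $|E_{\tilde M}|=|V_{\tilde C}|$; combined with $|V_{\tilde M}|=2|E_{\tilde M}|$ for a matching, this yields $n-|V_{\tilde M}|=n-2|V_{\tilde C}|$, as claimed.
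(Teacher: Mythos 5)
Your proposal is correct and takes essentially the same route as the paper: expand $\det(\mathcal{L}-xI)$ over matchings, factor out the minimal power $(x-1)^{n-|V_{\tilde M}|}$, check that the cofactor does not vanish at $x=1$, and convert via K\"onig's theorem. Your explicit no-cancellation argument (all maximum matchings share the edge count $|E_{\tilde M}|=|V_{\tilde M}|/2$, hence the same sign, and the degree weights are positive) spells out the one point the paper passes over with only a brief remark, so it is a welcome sharpening rather than a departure.
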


If we can find a minimum vertex cover of the tree, we know the size of a maximum matching by K\"onig's theorem, and then can tell the multiplicity of 1 as an eigenvalue of the normalized Laplacian.

\subsection{Eigenvalues near 1}

Let $\Lambda=\{\lambda_1,\ldots,\lambda_n\}$ be the spectrum of the normalized Laplacian for a tree $T$. We define the spectral separation by
$\bar{\lambda}=\min_{1\ne\lambda\in\Lambda}|1-\lambda|$, i.e. the shortest distance between the eigenvalue 1 and the other eigenvalues. In this part, we will give two upper bounds of this separation, using different methods, both taking advantage of properties of minimum vertex covers.

We now give the first upper bound of this separation, with a proof similar to the proof \cite{Grigoryan2009} of the second $\le$ of the discrete Cheeger's inequality ($\lambda_2\le 2h$).
Here, the measure of a vertex $v$ is defined by $\mu_v=\deg{v}$, while the measure (also known as the ``volume''\cite{Chung1997}) of a vertex subset $\Omega\subset V$ is defined by
\begin{equation*}
\mu_\Omega=\sum_{v\in\Omega}\mu_v=\sum_{v\in\Omega}\deg{v}.
\end{equation*}

\begin{thm}
$\bar{\lambda}\le\frac{\mu_{V-C}}{\mu_C}$ where $C$ is a minimum vertex cover of the tree in question.
\end{thm}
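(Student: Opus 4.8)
The plan is to cast this as a Courant--Fischer estimate for a single interior eigenvalue, exactly in the spirit of the $\lambda_2\le 2h$ argument. I would work with the degree-weighted inner product $\langle f,g\rangle_\mu=\sum_{v\in V}\deg v\,f(v)g(v)$, for which $\mathcal L$ is self-adjoint and $\langle\mathcal Lf,f\rangle_\mu=\sum_{uv\in E}(f(u)-f(v))^2$; write $\mathcal R(f)=\langle\mathcal Lf,f\rangle_\mu/\langle f,f\rangle_\mu$ for the Rayleigh quotient. By Theorem \ref{mul1} the eigenvalue $1$ has multiplicity $n-2|C|$, and by the symmetry of the spectrum about $1$ (Section 2) the eigenvalues split as $|C|$ below $1$, $n-2|C|$ equal to $1$, and $|C|$ above $1$. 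Hence the smallest eigenvalue exceeding $1$ is $\lambda_{n-|C|+1}$, and $\bar\lambda\le\lambda_{n-|C|+1}-1$; so it suffices to prove $\lambda_{n-|C|+1}\le 1+\mu_{V-C}/\mu_C$.

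The key structural input is Property 1: since $V-C$ is an independent set, every edge incident to a vertex of $V-C$ has its other endpoint in $C$. Consequently, for any $g$ supported on $V-C$ one has $\langle\mathcal Lg,g\rangle_\mu=\sum_{v\in V-C}\deg v\,g(v)^2=\langle g,g\rangle_\mu$, i.e.\ $\mathcal R(g)=1$ identically on the $(n-|C|)$-dimensional space $\mathcal V=\{g:\mathrm{supp}\,g\subset V-C\}$. I would then choose the test subspace $U=\mathcal V\oplus\mathrm{span}(\mathbf 1_C)$, of dimension $n-|C|+1$, where $\mathbf 1_C$ is the indicator of the cover. Since $n-|C|+1$ is exactly the Courant--Fischer index, $\lambda_{n-|C|+1}=\min_{\dim W=n-|C|+1}\max_{0\ne f\in W}\mathcal R(f)\le\max_{0\ne f\in U}\mathcal R(f)$, and the whole problem reduces to bounding $\mathcal R$ on $U$.

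For $f=g+t\,\mathbf 1_C$ with $g\in\mathcal V$, I would rewrite the target inequality $\mathcal R(f)\le 1+\mu_{V-C}/\mu_C$ as a statement about the form $\langle(I-\mathcal L)f,f\rangle_\mu$. Three short computations drive it: $\langle(I-\mathcal L)g,g\rangle_\mu=0$ (independence of $V-C$); $(I-\mathcal L)\mathbf 1_C\equiv 1$ on $V-C$ (again because every neighbour of a $V-C$ vertex lies in $C$), so the cross term equals $s(g):=\langle g,\mathbf 1\rangle_\mu=\sum_{v\in V-C}\deg v\,g(v)$; and $\langle(I-\mathcal L)\mathbf 1_C,\mathbf 1_C\rangle_\mu=2|E_{CC}|=\mu_C-\mu_{V-C}$, where $|E_{CC}|$ is the number of edges with both endpoints in $C$. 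Using $\langle f,f\rangle_\mu=\langle g,g\rangle_\mu+t^2\mu_C$, the claim collapses to the single quadratic inequality $\mu_C\,t^2+2s(g)\,t+\tfrac{\mu_{V-C}}{\mu_C}\langle g,g\rangle_\mu\ge 0$ for all real $t$, whose discriminant condition is precisely $s(g)^2\le\mu_{V-C}\,\langle g,g\rangle_\mu$. That last inequality is just Cauchy--Schwarz, since $s(g)=\langle g,\mathbf 1_{V-C}\rangle_\mu$ and $\|\mathbf 1_{V-C}\|_\mu^2=\mu_{V-C}$, which closes the argument.

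The main conceptual obstacle is that $1$ lies in the interior of the spectrum, so a single Rayleigh quotient controls only a weighted average of eigenvalues and cannot by itself locate the nearest one; the resolution is to build a test subspace of the exact Courant--Fischer dimension $n-|C|+1$, letting the vanishing of $\mathcal R-1$ on the independent-set block $\mathcal V$ do the heavy lifting while a single extra direction $\mathbf 1_C$ accounts for the remaining index. I expect the only point needing care to be the bookkeeping that evaluates $\langle(I-\mathcal L)\mathbf 1_C,\mathbf 1_C\rangle_\mu$ as $\mu_C-\mu_{V-C}$, since it is exactly this term that lets the $t^2$ contributions combine into the clean coefficient $\mu_C$ and thereby produce the Cauchy--Schwarz discriminant.
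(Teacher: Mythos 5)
Your proof is correct, and it rests on the same foundation as the paper's own argument --- a variational estimate for $\lambda_{n-|C|+1}$ (the relevant index being fixed by Theorem \ref{mul1} plus spectral symmetry), test functions constant on $C$ and free on the independent set $V-C$, and a closing Cauchy--Schwarz --- but you deploy the variational principle in a genuinely different form, and the difference is worth recording. The paper uses $\lambda_k=\min\{\mathcal{R}(f):f\perp f_1,\dots,f_{k-1}\}$: it produces a \emph{single} test function $g$ that is constant on $C$ and orthogonal to the first $n-|C|$ eigenvectors (its existence rests on a dimension count the paper only sketches), and it crucially uses orthogonality to the constant eigenvector $f_1$ to replace $\sum_{v\in V-C}g(v)\mu_v$ by $-a\mu_C$ before applying Cauchy--Schwarz. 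You instead use the subspace form $\lambda_k=\min_{\dim W=k}\max_{0\ne f\in W}\mathcal{R}(f)$ on the explicit $(n-|C|+1)$-dimensional space $U=\{g:\operatorname{supp}g\subset V-C\}\oplus\operatorname{span}(\mathbf{1}_C)$, so no actual eigenvector ever enters the argument; the price is the free parameter $t$, which you eliminate by a discriminant argument whose condition $s(g)^2\le\mu_{V-C}\langle g,g\rangle_\mu$ is precisely the paper's Cauchy--Schwarz step (there it appears with $s(g)=-a\mu_C$ already substituted). What your route buys: self-containedness and a bit more rigor --- no existence-by-dimension-count, no appeal to the form of $f_1$, control of the maximum over the whole test subspace rather than of one cleverly chosen element, and an explicit justification of the reduction $\bar\lambda\le\lambda_{n-|C|+1}-1$, which the paper leaves implicit. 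What the paper's route buys: brevity --- imposing $g\perp f_1$ kills the free parameter at the outset, so the whole proof is a single chain of (in)equalities. The two computations are otherwise term-for-term parallel; your identity $2|E_{CC}|=\mu_C-\mu_{V-C}$ plays the role of the paper's expansion of $\sum_{v\in V-C}(g(v)-a)^2\mu_v$.
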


\begin{proof}
Let $0=\lambda_1\le\ldots\le\lambda_n=2$ be the eigenvalues, let $f_k$ be a $\lambda_k$-eigenvector. We know that 
\begin{equation*}
\lambda_k=\min_{f\in\Omega_{k-1}^{\perp}}\frac{\langle\mathcal{L}f,f\rangle}{\langle f,f\rangle}
\end{equation*}
where $\Omega_k=\{f_1,\ldots,f_k\}$.

Let $g\in\Omega_{n-|C|}^{\perp}$, i.e. $g$ is orthogonal to $f_1,\ldots,f_{n-|C|}$. The orthogonality gives a system of $n-|C|$ independent equations with $n$ unknowns, the dimension of the solution space is $|C|$. So we have the freedom to set $g$ to be a constant $a$ on $C$. We have, with the individual steps being explained subsequently,
\begin{eqnarray*}
\lambda_{n-|C|+1}&\le&\frac{\langle\mathcal{L}g,g\rangle}{\langle g,g\rangle}=\frac{\langle\nabla g,\nabla
g\rangle}{\langle g,g\rangle}\\
&=&\frac{\sum_{v\in V-C}(g(v)-a)^2\mu_v}{a^2\mu_C+\sum_{v\in
V-C}g(v)^2\mu_v}\\
&=&\frac{a^2\mu_{V-C}+2a^2\mu_C+\sum_{v\in V-C}g(v)^2\mu_v}{a^2\mu_C+\sum_{v\in
V-C}g(v)^2\mu_v}\\
&=&1+\frac{a^2\mu_V}{a^2\mu_C+\sum_{v\in V-C}g(v)^2\mu_v}\\
&\le&1+\frac{a^2\mu_V}{a^2\mu_C+a^2\frac{\mu_C^2}{\mu_{V-C}}}
=1+\frac{\mu_{V-C}}{\mu_C}
\end{eqnarray*}
which is the claim.

The second line is due to the fact that $g$ is constant on $C$, so if $v,w\in C$ and $v\sim w$, the edge $vw$ will not contribute in the calculation. Since $V-C$
is an independent set, we only need to consider the edges connecting $v\in C$ and $w\in V-C$.

The third line results from the orthogonality of $g$ to $f_1$ which is constant on $V$. This orthogonality implies that $a\mu_C+\sum_{v\in V-C}g(v)\mu_v=0$.

The last inequality comes from the relation
\begin{equation*}
\frac{\sum_{v\in
V-C}g(v)^2\mu_v}{\mu_{V-C}}\ge\left(\frac{\sum_{v\in
V-C}g(v)\mu_v}{\mu_{V-C}}\right)^2=\left(\frac{a\mu_C}{\mu_{V-C}}\right)^2
\end{equation*}
\end{proof}

%An analogy of the first $\le$ in the discrete Cheeger's inequality \cite{Grigoryan2009} has been tried but did not result in any non-trivial lower bound.

Now, we will use the interlacing technique suggested by Haemers \cite{Haemers1995}, to find a second upper bound of the spectral separation.

\begin{define}[Interlacing]
Consider two sequences of real numbers $\lambda_1\le\cdots\le\lambda_n$ and $\mu_1\le\cdots\le\mu_m$ with $m<n$. The second sequence is said to \emph{interlace} the first one if $\lambda_i\le\mu_i\le\lambda_{n-m+i}$, for $i=1,\ldots,m$.
\end{define}

The following interlacing theorem \cite{Haemers1995} will be useful for us.

\begin{thm}[Haemers]
Suppose that the rows and columns of the matrix
\begin{equation*}
\begin{pmatrix}
 A_{1,1} & \cdots & A_{1,n} \\
 \vdots & \ddots & \vdots \\
 A_{n,1} & \cdots & A_{n,n}
\end{pmatrix}
\end{equation*}
are partitioned according to a partitioning $X_1,\ldots,X_m$ of
$\{1,\ldots,n\}$ with characteristic matrix $\tilde{S}$, i.e. $\tilde{S}(i,j)=1
\text{ if }i\in X_j\text{, and } 0$ otherwise. We construct the \emph{quotient
matrix} $\tilde{B}$ whose entries are the average row sums of the blocks of
$A$, i.e.
\begin{equation*}
\left(\tilde{B}_{ij}\right)=\frac{1}{|X_i|}(\tilde{S}^TA\tilde{S})_{ij}
\end{equation*}
Then the eigenvalues of $\tilde{B}$ interlace the eigenvalues of $A$.
\end{thm}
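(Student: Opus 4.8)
The plan is to prove Haemers' theorem in the standard way, assuming as usual that $A$ is symmetric so that the Courant--Fischer min--max characterization of eigenvalues is available; the quotient construction will serve only to produce, after a similarity transformation, the \emph{compression} of $A$ to an $m$-dimensional subspace, at which point the statement reduces to the classical Cauchy interlacing theorem.

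First I would remove the asymmetry of $\tilde B$. Since the columns of $\tilde S$ are the indicator vectors of the disjoint parts $X_j$, distinct columns are orthogonal and the $j$-th has squared norm $|X_j|$, so $D:=\tilde S^T\tilde S=\operatorname{diag}(|X_1|,\ldots,|X_m|)$ and the definition reads $\tilde B=D^{-1}\tilde S^T A\tilde S$. Setting $S=\tilde S D^{-1/2}$ one checks $S^TS=D^{-1/2}\tilde S^T\tilde S D^{-1/2}=I_m$, so $S$ has orthonormal columns, and
\[
D^{1/2}\tilde B D^{-1/2}=D^{-1/2}\tilde S^T A\tilde S D^{-1/2}=S^TAS=:B.
\]
Hence $\tilde B$ is similar to $B$; in particular its eigenvalues are real and coincide with those of the symmetric matrix $B=S^TAS$. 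It therefore suffices to show that the eigenvalues $\mu_1\le\cdots\le\mu_m$ of $B$ interlace the eigenvalues $\lambda_1\le\cdots\le\lambda_n$ of $A$.

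Next I would use that $x\mapsto Sx$ is an isometric embedding of $\mathbf R^m$ onto the column space $\mathcal C=\operatorname{col}(S)$, since $\|Sx\|^2=x^TS^TSx=\|x\|^2$. Writing $R_A(y)=\frac{y^TAy}{y^Ty}$ and $R_B(x)=\frac{x^TBx}{x^Tx}$ for the Rayleigh quotients, this embedding satisfies $R_A(Sx)=R_B(x)$ and, being injective, sends every $k$-dimensional subspace of $\mathbf R^m$ to a $k$-dimensional subspace of $\mathbf R^n$. For the lower bound I would write $\mu_i=\min_{\dim U=i}\max_{0\ne x\in U}R_B(x)$ and, for each $i$-dimensional $U\subset\mathbf R^m$, pass to the equally $i$-dimensional $SU\subset\mathbf R^n$; then $\max_{0\ne x\in U}R_B(x)=\max_{0\ne y\in SU}R_A(y)\ge\min_{\dim W=i}\max_{0\ne y\in W}R_A(y)=\lambda_i$, and taking the minimum over $U$ gives $\mu_i\ge\lambda_i$. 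For the upper bound I would use the dual characterization $\mu_i=\max_{\dim U=m-i+1}\min_{0\ne x\in U}R_B(x)$, send each such $U$ to the $(m-i+1)$-dimensional $SU$, and compare with $\lambda_{n-m+i}=\max_{\dim W=m-i+1}\min_{0\ne y\in W}R_A(y)$ (note $n-(m-i+1)+1=n-m+i$) to obtain $\mu_i\le\lambda_{n-m+i}$. Together these are exactly $\lambda_i\le\mu_i\le\lambda_{n-m+i}$.

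I expect the only genuine obstacle to be the first step: recognizing that $\tilde B$, which is not symmetric as written, is similar via the diagonal scaling $D^{1/2}$ to the symmetric compression $S^TAS$, so that the spectral machinery for symmetric matrices applies. Once this reduction is made the argument is the classical Cauchy/Poincar\'e interlacing, and what remains is pure bookkeeping of dimensions---verifying that an $(m-i+1)$-dimensional subspace maps to one of the same dimension and that the index shift in the max--min formula lands exactly on $\lambda_{n-m+i}$.
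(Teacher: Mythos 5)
Your proof is correct, but there is nothing in the paper to compare it against: the paper does not prove this statement at all, it quotes it as Haemers' theorem and refers to \cite{Haemers1995} for the proof. Measured against Haemers' original argument, your route coincides in its first half and differs mildly in its second. The reduction is identical: Haemers also notes that $D=\tilde{S}^T\tilde{S}$ is the diagonal matrix of part sizes, sets $S=\tilde{S}D^{-1/2}$ so that $S^TS=I_m$, and uses the similarity $D^{1/2}\tilde{B}D^{-1/2}=S^TAS$ to replace the non-symmetric quotient by the symmetric compression $B=S^TAS$. Where you diverge is in proving that the eigenvalues of $B$ interlace those of $A$: you invoke Courant--Fischer twice (min--max for $\mu_i\ge\lambda_i$, max--min for $\mu_i\le\lambda_{n-m+i}$), pushing whole subspaces forward through the isometry $S$, whereas Haemers picks, for each $i$, a single nonzero test vector $s_i$ in the intersection of the span of the first $i$ eigenvectors of $B$ with the orthogonal complement of $S^Tu_1,\ldots,S^Tu_{i-1}$ (the $u_j$ being eigenvectors of $A$), which exists by dimension count; one Rayleigh-quotient comparison then gives one inequality, and the other follows by applying the same to $-A$ and $-B$. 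The content is the same, but the trade-off is real: your formulation is the cleaner textbook argument, while Haemers' explicit test vectors are what allow him to analyze the equality case (tight interlacing forcing $Ss_i$ to be an eigenvector of $A$ and the partition to behave equitably), a refinement that the pure min--max formulation does not hand you directly and that his paper exploits.

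Two remarks on hypotheses. First, you were right to add the assumption that $A$ is symmetric: the paper's transcription of the theorem omits it, but Haemers assumes it, and without it the statement is not even meaningful (the eigenvalues of $A$, and of $\tilde{B}$, need not be real; a small non-symmetric example with one part already violates interlacing). Second, be aware that the paper goes on to apply this theorem to the normalized Laplacian $\mathcal{L}$, which is \emph{not} symmetric but only diagonally similar to a symmetric matrix; justifying that application strictly requires a weighted variant of your argument (self-adjointness with respect to the degree inner product, with volume-weighted rather than uniform row averaging in the quotient), which neither the statement above nor your proof covers. That is a gap in scope on the paper's side, not an error in your proof of the theorem as Haemers states it.
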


In \cite{Haemers1995}, a matrix is often partitioned into two parts in order to apply this theorem. Things will be complicated if we try to work with more parts. But, because of some properties of vertex covers, it is possible to partition a normalized Laplacian matrix into $n-|C|+1$ parts. We now prove our second upper bound of the separation.

\begin{thm}
\begin{equation*}
\bar{\lambda}\le 1-\frac{1}{|C|}\sum_{C\ni u\sim v\in C}\left(\frac{1}{\deg{u}}+\frac{1}{\deg{v}}\right)
\end{equation*}
where $C$ is a minimum vertex cover of the tree in question.
\end{thm}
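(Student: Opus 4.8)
The plan is to apply Haemers' interlacing theorem to the partition foreshadowed in the preceding paragraph: take one block to be the whole cover $C$, and let each of the remaining vertices form its own singleton block, so that the partition has $1+(n-|C|)=n-|C|+1$ parts. Writing $p=n-|C|+1$ for the number of parts, interlacing will compare the $p$ eigenvalues of the quotient matrix $\tilde B$ with those of $\mathcal L$, and the claimed bound should fall out of the largest eigenvalue of $\tilde B$ together with the symmetry of the tree spectrum about $1$.

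First I would compute $\tilde B$ explicitly, the essential input being that $V-C$ is an independent set (the complement of a vertex cover), so that no two singleton blocks are adjacent. Hence every off-diagonal entry of $\tilde B$ between two distinct singletons vanishes, and each singleton diagonal entry equals the diagonal entry $1$ of $\mathcal L$. Since every vertex $v\in V-C$ has all of its neighbours inside $C$, the entry from the singleton $\{v\}$ back to the block $C$ is exactly $-\frac{1}{\deg v}\deg v=-1$. The crucial entry is $\tilde B_{11}$, the average over $u\in C$ of the $C$-block row sums: the diagonal $1$'s contribute $|C|$, while each edge with both endpoints $u,v$ in $C$ contributes $-\tfrac{1}{\deg u}$ and $-\tfrac{1}{\deg v}$, so that after dividing by $|C|$ one obtains precisely $\tilde B_{11}=1-\frac{1}{|C|}\sum_{C\ni u\sim v\in C}\left(\frac{1}{\deg u}+\frac{1}{\deg v}\right)$, the right-hand side of the claim.

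Next I would read off the spectrum of $\tilde B$. Because $\mathcal L$ has zero row sums, so does $\tilde B$, giving the eigenvalue $0$ with the constant eigenvector. The singleton part being an identity block with all cross-entries to other singletons equal to $0$ produces an eigenspace of dimension $n-|C|-1$ for the eigenvalue $1$, spanned by the vectors that vanish on the $C$-block and on which the first row of $\tilde B$ evaluates to $0$. The trace then forces the single remaining eigenvalue to equal $1+\tilde B_{11}$, which lies in $[1,2]$ and is therefore the largest eigenvalue $\mu_p$. Interlacing yields $\lambda_p\le\mu_p=1+\tilde B_{11}$. Finally, by Theorem \ref{mul1} the eigenvalue $1$ has multiplicity $n-2|C|$, so exactly $|C|$ eigenvalues of $\mathcal L$ exceed $1$; the smallest of these sits at position $p=n-|C|+1$, and by the symmetry of the tree spectrum about $1$ one has $\bar\lambda=\lambda_p-1$. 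Combining, $\bar\lambda=\lambda_p-1\le\tilde B_{11}$, which is the asserted inequality.

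I expect the main obstacle to be the bookkeeping in the two computations that make the argument work: verifying that the contributions of the internal edges of $C$ assemble exactly into the stated sum so that $\tilde B_{11}$ matches the target, and pinning down the full spectrum of $\tilde B$ — that the remaining eigenvalue is $1+\tilde B_{11}$ and is the largest — together with the correct index matching in the interlacing inequality. The conceptual step that converts the one-sided interlacing bound into the two-sided statement about $\bar\lambda$ is the recognition, via the multiplicity formula and spectral symmetry, that the sought separation is exactly $\lambda_{n-|C|+1}-1$.
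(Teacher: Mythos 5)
Your proposal is correct and follows essentially the same route as the paper: the same partition of $\mathcal{L}$ into the block $C$ plus $n-|C|$ singletons, the same quotient matrix with the same entries, Haemers' interlacing applied at index $n-|C|+1$, and the same use of Theorem~\ref{mul1} together with the symmetry of the tree spectrum to identify $\bar\lambda$ with $\lambda_{n-|C|+1}-1$. The only cosmetic difference is how the quotient spectrum is obtained — you use zero row sums, a kernel count, and the trace, while the paper computes the characteristic polynomial via the Schur-complement determinant formula — but both yield the eigenvalues $0$, $1$ (with multiplicity $n-|C|-1$), and $1+\tilde{B}_{11}$, so the arguments coincide in substance.
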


\begin{proof}
Let's label the vertices in $V-C=\{v_1,\ldots,v_{n-|C|}\}$. We can now
partition the normalized Laplacian matrix into $n-|C|+1$ parts, by setting $X_0=C$ and $X_i={v_i}$. So the quotient matrix 
\begin{equation*}
\tilde{L}=\left[\begin{matrix}
                A&B\\
                C&I_n
                \end{matrix}
\right]
\end{equation*}
where the lower right part is $I_n$ because $V-C$ is an independent set. $B$ is a row matrix whose $i$-th entry is $\frac{1}{|C|}\sum_{u\sim v_i}\frac{-1}{\deg{u}}$. $C$ is a column matrix whose entries are all -1 (because the row sum of $\mathcal{L}$ is always 0). $A$ is a number whose value is $1-\frac{1}{|C|}\sum_{C\ni u\sim v\in C}\left(\frac{1}{\deg{u}}+\frac{1}{\deg{v}}\right)$

We know that
\begin{equation*}
\det\begin{pmatrix}A& B\\C& D \end{pmatrix}
=\det(D) \det(A - BD^{-1} C) 
\end{equation*}
so the characteristic polynomial of $\tilde{L}$ is 
\begin{equation*}
\det\begin{pmatrix}\
    \lambda-A & -B\\ -C & (\lambda-1)I_n
    \end{pmatrix}
=(\lambda-1)^{n-|C|}\left((\lambda-A)-\frac{1}{\lambda-1}BC\right)
\end{equation*}
Now that $V-C$ is an independent set, all neighbors of $v\in V-C$ are in $C$. So,
\begin{eqnarray*}
BC&=&\frac{1}{|C|}\sum_{v\in V-C}\sum_{u\sim v}\frac{1}{\deg{u}}
=\frac{1}{|C|}\sum_{C\ni u\sim v\in V-C}\frac{1}{\deg{u}}\\
&=&1-\frac{1}{|C|}\sum_{C\ni u\sim v\in C}\left(\frac{1}{\deg{u}}+\frac{1}{\deg{v}}\right)
=A.
\end{eqnarray*}
In fact, this is obvious because the row sums of $\mathcal{L}$ are zero, and so are the row sums of $\tilde{L}$.

So the eigenvalues of $\tilde{L}$ are $0\le 1\le 1+A$, where 0 and $1+A$ are simple eigenvalues, and 1 is an eigenvalue of multiplicity $n-|C|-1$.

By interlacing, we know that $\lambda_{n-|C|+1}\le 1+A\le \lambda_{n}=2$. So $A$ is an upper bound of the separation $\bar{\lambda}$.
\end{proof}

The graph in Figure 1 can be taken as a simple example. Both estimations give $2/3$ as the upper bound. This is an exact result, because all the inequalities in the proofs above become equalities for this graph.

%We have found 2 upper bounds. In many cases these upper bounds are far from good. It is worth noting that, if the minimum vertex cover used for estimaion is itself an independent set, both estimations will give 1 as upper bound. But in this case, 1 is in fact not an eigenvalue, which makes the ``spectral separation'' senseless.

%We failed to find any non-trivial lower bound. That is discouraging, but yet within expectation. The discrete Cheeger's inequality can be proved \cite{Grigoryan2009,Chung1997} because we know everything about $f_1$. But we know almost nothing about $f_2,...,f_{|C|}$, so it is not surprising to have difficulties in estimating $\lambda_{|C|}$.

\section{Minimum Vertex Cover and 1-Eigenvectors}

We show in Figure 2 a typical 1-eigenvector. All the pictures in this paper showing a real-valued function $f$ on $V$ will use the size of a vertex $v$ to represent the absolute value of $f(v)$, and the color of a vertex to represent the sign (black for negative, gray for positive, and the white vertices represent the zeroes).

\begin{figure}[ht]
  \centering
  \includegraphics[scale=0.3]{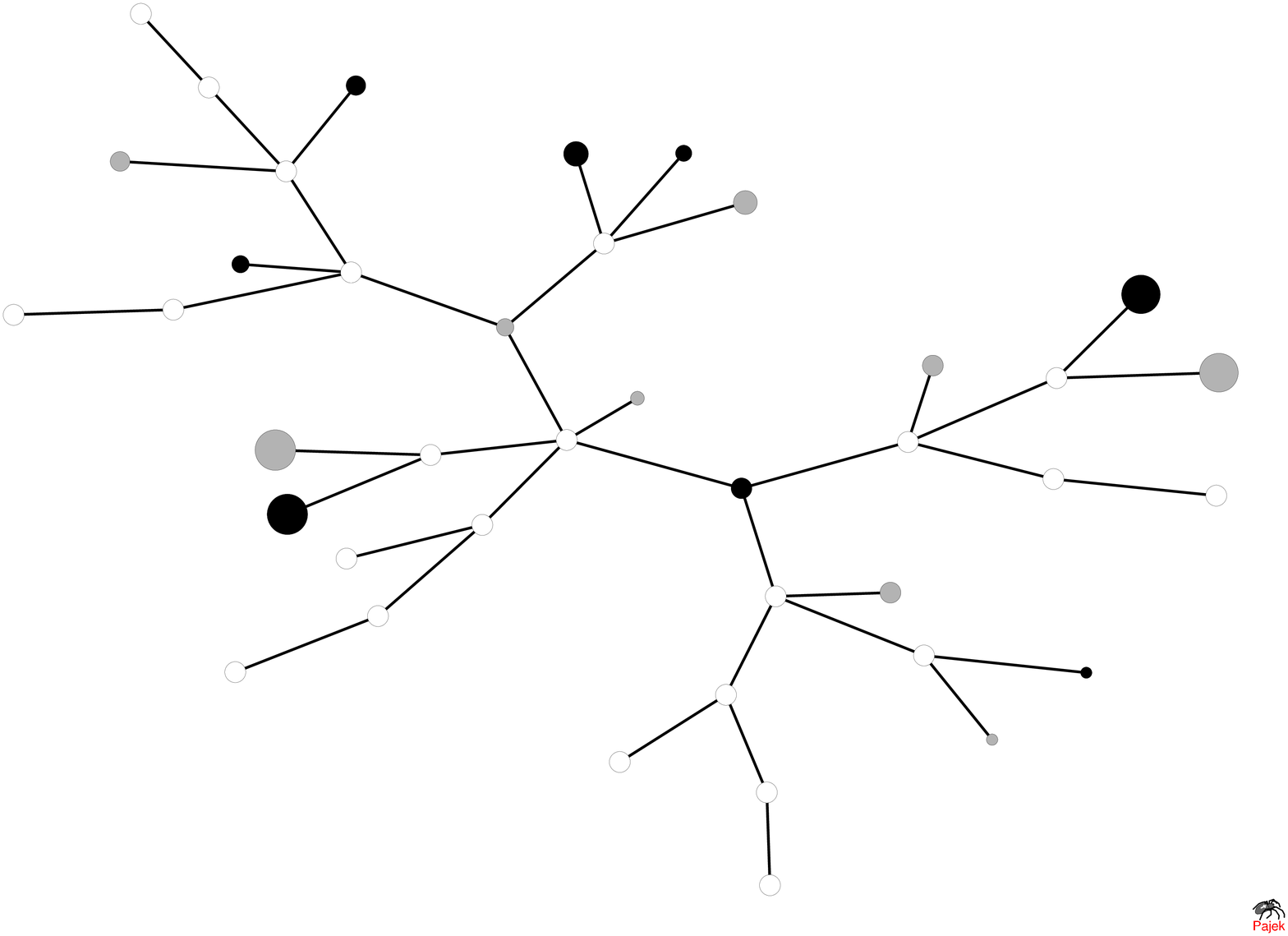}
  \caption{A typical 1-eigenvector.}
\end{figure}

It is not difficult to find a minimum vertex cover for such a small tree, and we find that a 1-eigenvector always vanishes (equals $0$) on a minimum vertex cover. This is more obvious in Figure 3 and Figure 1 (Figure 1 shows in fact a 1-eigenvector).

\begin{figure}[ht]
  \centering
  \includegraphics[scale=0.3]{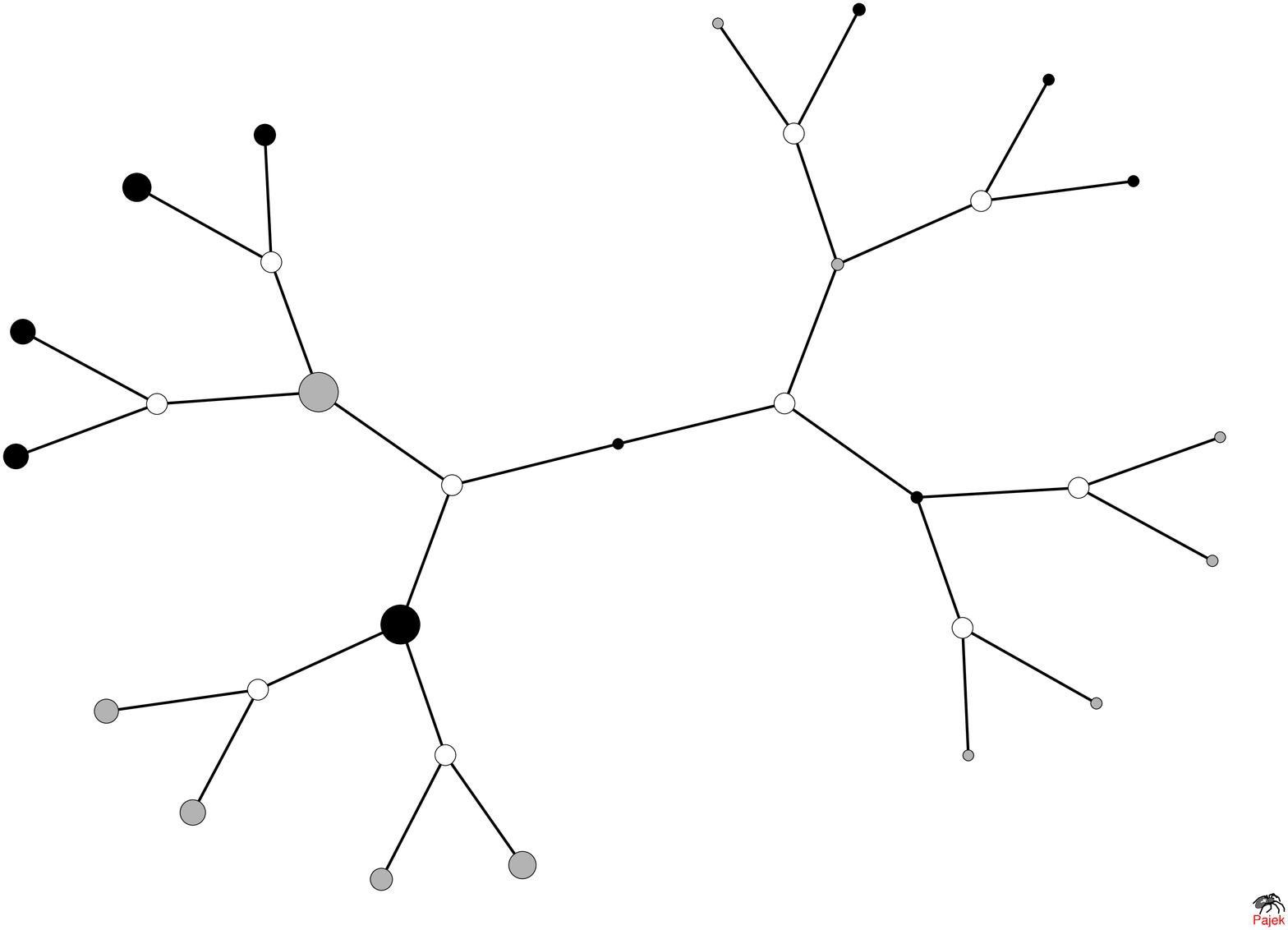}
  \caption{1-eigenvector of a symmetrical tree.}
\end{figure}

This observation is finally proved as the following theorem:

\begin{thm}\label{0pt1}
Let $T=(V,E)$ be a tree, $C$ be one of its minimum vertex covers, $f$ be one of its 1-eigenvectors, then $\forall c\in C, f(c)=0$. That is, any 1-eigenvector vanishes on all the minimum vertex covers. In other words, the set of vanishing points of any 1-eigenvector contains all the minimum vertex covers. 
\end{thm}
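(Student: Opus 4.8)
The plan is to exploit the fact that the eigenvalue equation $\mathcal{L}f=f$ is, after cancelling $f(u)$ and multiplying through by $\deg u$, the purely combinatorial condition
\[
\sum_{v\sim u}f(v)=0\qquad\text{for every vertex }u,
\]
which no longer mentions the degrees. In particular, if $\ell$ is a leaf whose only neighbour is $p$, the equation at $\ell$ reads $f(p)=0$, so $f$ vanishes on the whole set $P$ of parents of leaves. This is the single easy ingredient; all the remaining effort goes into upgrading it to a statement about \emph{every} minimum vertex cover.

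I would argue by induction on $|V|$, proving the equivalent statement that $f$ vanishes on $U$, the union of all minimum vertex covers (so that ``$f=0$ on each minimum cover $C$'' is treated uniformly). The reduction step is to delete $P$. Since $f$ already vanishes on $P$, for any $u\notin P$ the neighbours of $u$ split into parent-neighbours (on which $f=0$) and non-parent neighbours, and one checks that a non-parent neighbour of $u$ cannot have all of \emph{its} neighbours in $P$; hence the neighbour sum of $f$ over $T-P$ equals its neighbour sum over $T$, which is $0$. Discarding the vertices all of whose neighbours lie in $P$ (they become isolated in $T-P$ and carry no edges, so this is harmless), the restriction of $f$ to the resulting smaller forest $T'$ is again a $1$-eigenvector, and by induction $f$ vanishes on $U(T')$.

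It then remains to compare $U(T)$ with $P\cup U(T')$, and here I would lean on the cover machinery already set up. Deleting $P$ leaves, besides $T'$, a set $D$ of vertices whose neighbours all lie in $P$ (the leaves, and possibly a few others). A counting argument shows that for any minimum cover $C$ one has $|C\cap(P\cup D)|\ge|P|$, because each parent forces at least one cover vertex through its pendant edges and these contributions are disjoint across parents, while $|C\cap V(T')|\ge\tau(T')$, where $\tau(T')$ denotes the minimum cover size of $T'$. Since the greedy cover $P\cup(\text{minimum cover of }T')$ witnesses $|C|=\tau(T)\le|P|+\tau(T')$, both inequalities must be equalities. This forces $C\cap V(T')$ to be a minimum cover of $T'$, so $U(T)\cap V(T')\subseteq U(T')$ and those vertices are covered by the induction; it also forces each parent to contribute exactly one vertex, so no interior vertex of $D$ can lie in any minimum cover. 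Properties \ref{PDeletion1} and \ref{PLeaves} are the natural tools for streamlining these cover comparisons.

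The genuinely delicate point, and the step I expect to be the main obstacle, is the leaves that do belong to some minimum cover. The equality analysis shows that a leaf $\ell$ can lie in a minimum cover $C$ only when it is the unique pendant neighbour of its parent $p$ with $p\notin C$, and that then every other neighbour $w$ of $p$ must lie in $C$; each such $w$ is either itself a parent (so $f(w)=0$ by the easy step) or lies in $C\cap V(T')\subseteq U(T')$ (so $f(w)=0$ by induction). Substituting into the neighbour-sum equation at $p$, namely $f(\ell)+\sum_{w\sim p,\,w\ne\ell}f(w)=0$, gives $f(\ell)=0$. Combining the three cases ($c\in P$, $c\in V(T')\cap U$, and $c$ a leaf in $U$) yields $f=0$ on all of $U(T)$, hence on every minimum vertex cover. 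The spectral content is thus immediate once the eigen-equation is rewritten as a neighbour-sum condition; the real work is the combinatorial bookkeeping that pins down exactly which vertices can occur in a minimum cover of a tree.
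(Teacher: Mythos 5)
Your proof is correct, but it takes a genuinely different route from the paper's. The paper fixes a minimum cover $C$ and counts dimensions: a function vanishing on $C$ automatically satisfies the eigen-equation on the independent set $V-C$, so the candidate space is cut out by the $|C|$ neighbour-sum equations at the vertices of $C$ in the $n-|C|$ unknowns $f|_{V-C}$; these equations are independent (justified via Properties \ref{PLeaves} and \ref{PExpand}), so the candidate space has dimension $n-2|C|$, which by Theorem \ref{mul1} is exactly the multiplicity of the eigenvalue 1 --- hence the candidate space \emph{is} the whole 1-eigenspace and every 1-eigenvector vanishes on $C$. Your argument avoids Theorem \ref{mul1} and the independence claim entirely: it is an induction on $|V|$ that strips the set $P$ of parents of leaves (where $f=0$ is forced by the leaf equations), restricts $f$ to the smaller forest $T'$ (legitimate because the eigen-equation is the degree-free condition $\sum_{v\sim u}f(v)=0$), and then uses the equality chain $\tau(T)=|P|+\tau(T')$ to show that every minimum cover of $T$ splits into exactly one vertex per pendant group plus a minimum cover of $T'$, with the one delicate case --- a leaf belonging to some minimum cover --- killed by the neighbour-sum equation at its parent. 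The paper's proof buys brevity, at the cost of leaning on the multiplicity theorem and a tersely justified independence statement; yours buys a self-contained, purely combinatorial argument that treats all minimum covers simultaneously (via their union $U$) and yields extra structural information along the way, e.g.\ that a non-leaf vertex all of whose neighbours are parents of leaves lies in no minimum cover. Two small points to tighten: the restriction of $f$ to $T'$ may be identically zero, so state the induction hypothesis for all solutions of the neighbour-sum system rather than for (nonzero) eigenvectors; and since $T'$ is a forest, apply the hypothesis component by component, noting that minimum covers of a forest are exactly unions of minimum covers of its components.
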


\begin{proof}
Since $C$ is a minimum vertex cover, its complement set $V-C$ is an
independent set. If $\forall c\in C, f(c)=0$, the Laplace equation for eigenvalue 1
\begin{equation*}
\mathcal{L}f(v)=f(v)-\frac{\sum_{u\sim v}f(u)}{\deg{v}}=f(v)
\end{equation*}
is automatically satisfied on $V-C$, since the average over their neighbors is 0.

In order to be a 1-eigenvalue, $f$ should satisfy for all vertices $c\in C$
\begin{equation*}
\sum_{v\sim c} f(v)=0
\end{equation*}
This is a system of $|C|$ linear equations with $n-|C|$ unknowns. By Properties \ref{PLeaves} and \ref{PExpand} of minimum vertex covers, these equations are independent.

Let $M$ be a maximum matching of a tree with $n$ vertices. It is obvious that $M$ has at most $\left[\frac{n}{2}\right]$ edges, by K\"onig's theorem, $|C|\le\left[\frac{n}{2}\right]$. An alternative argument is that, since a tree is bipartite, each of the two parts is a vertex cover, but not necessarily minimum, so $|C|\le\left[\frac{n}{2}\right]$. In the case where $n=2|C|$, $f=0$ is the only solution, because $V-C$ is also a minimum vertex cover. In the case where $n>2|C|$, there are more unknowns than equations, the dimension of the solution space is $n-2|C|$, which is exactly the multiplicity of the eigenvalue 1 as we have proved.

So a basis $f_1,\ldots,f_{n-2|C|}$ of the solution space is also a basis of the 1-eigenspace, and a 1-eigenvector must be a linear combination of $f_1,\ldots,f_{n-2|C|}$. This proves that every 1-eigenvector vanishes on $C$.
\end{proof}

\section{Minimum Vertex Covers and pre-1-Eigenvectors}

Here, by abuse of language, we mean by ``pre-1-eigenvectors'' the eigenvectors of the largest eigenvalue smaller than 1.

The sign graph (strong discrete nodal domain) is a discrete version of Courant's nodal domain. 

\begin{define}
Consider $G=(V,E)$ and a real-valued function $f$ on $V$. A \emph{positive (resp. negative) sign graph} is a maximal, connected subgraph of $G$ with vertex set $V'$, such that $f|_{V'}>0$(resp. $f|_{V'}<0$).
\end{define}

The study of the sign graphs often deals with generalized Laplacians. A matrix $M$ is called a generalized Laplacian matrix of the graph $G=(V,E)$ if $M$ has non-positive off-diagonal entries, and $M(u,v)<0$ if and only if $uv\in E$. Obviously, a normalized Laplacian is a generalized Laplacian. 

A \emph{Dirichlet normalized Laplacian} $\mathcal{L}_{\Omega}$ on a vertex set $\Omega$ is an operator defined on $\mathcal{F}_{\Omega}$, the set of real-valued functions on $\Omega$. It is defined by
\begin{equation*}
\mathcal{L}_{\Omega}f=(\mathcal{L}\tilde{f})|_{\Omega}
\end{equation*}
where $\tilde{f}\in\mathcal{F}$ vanishes on $V-\Omega$ and equals $f$ on $\Omega$. It can be regarded as a normalized Laplacian defined on a subgraph with boundary conditions, and has many properties similar to those of the normalized Laplacian. A Dirichlet normalized Laplacian is also a generalized Laplacian.

Previous works \cite{Gladwell2001,Biyikoglu2004,Biyikoglu2007} have established the following discrete analogues of Courant's Nodal Domain Theorem for generalized Laplacians:

\begin{thm}
Let $G$ be a connected graph and $A$ a generalized Laplacian of $G$, let the eigenvalues of $A$ be non-decreasingly ordered, and $\lambda_k$ be an eigenvalue of multiplicity $r$, i.e.
\begin{equation*}
\lambda_1\le\cdots\le\lambda_{k-1}<\lambda_k=\cdots=\lambda_{k+r-1}<\lambda_{k+r}\le\cdots\le\lambda_n
\end{equation*}
Then a $\lambda_k$-eigenvalue has at most $k+r-1$ sign graphs.
\end{thm}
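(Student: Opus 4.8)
The plan is to follow the classical variational strategy behind Courant's nodal domain theorem, adapted to the discrete setting, with Perron--Frobenius theory doing the work that unique continuation does in the continuous case. Throughout I may assume $A$ to be self-adjoint with respect to the natural inner product $\langle f,g\rangle=\sum_v f(v)g(v)\mu_v$; for the normalized Laplacian this amounts to replacing $\mathcal{L}$ by the similar symmetric operator $D^{1/2}\mathcal{L}D^{-1/2}$, which has the same off-diagonal sign pattern and, since $D^{1/2}$ is a positive diagonal matrix, exactly the same sign graphs. Then the Courant--Fischer principle $\lambda_m=\min_{\dim U=m}\max_{0\ne\psi\in U}\mathcal{R}(\psi)$, with Rayleigh quotient $\mathcal{R}(\psi)=\langle A\psi,\psi\rangle/\langle\psi,\psi\rangle$, is available.

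First I would fix a $\lambda_k$-eigenvector $f$ and let $D_1,\dots,D_m$ be the vertex sets of its sign graphs. Define $\phi^{(i)}=f\cdot\mathbf{1}_{D_i}$, the restriction of $f$ to $D_i$ extended by zero. Because the $D_i$ are pairwise disjoint, the $\phi^{(i)}$ have disjoint supports, hence are linearly independent and span a subspace $U$ of dimension exactly $m$, and $f=\sum_i\phi^{(i)}$. The goal is to show that $\mathcal{R}(\psi)\le\lambda_k$ for \emph{every} $\psi\in U$; by the easy direction of Courant--Fischer this forces $\lambda_m\le\lambda_k=\lambda_{k+r-1}<\lambda_{k+r}$, and since the eigenvalues are non-decreasing this gives $m\le k+r-1$, which is the assertion.

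To control $U$ I would pass to the reduced $m\times m$ matrix $B$ with $B_{ij}=\langle A\hat\phi^{(i)},\hat\phi^{(j)}\rangle$, where $\hat\phi^{(i)}=\phi^{(i)}/\|\phi^{(i)}\|$; since the $\hat\phi^{(i)}$ are orthonormal, $\mathcal{R}$ restricted to $U$ is exactly the Rayleigh quotient of $B$, so the claim is equivalent to $B\preceq\lambda_k I$. Two computations feed this. For $i\ne j$ the entry $B_{ij}$ collects only edges joining $D_i$ to $D_j$; on such an edge $A(u,v)<0$ while $f(u),f(v)$ have opposite signs (adjacent vertices of the same sign would lie in one sign graph), so every contribution is non-negative and $B_{ij}\ge0$. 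Second, writing $w_i=\|\phi^{(i)}\|>0$, the identity $f=\sum_i w_i\hat\phi^{(i)}$ together with $Af=\lambda_k f$ and self-adjointness yields $(Bw)_i=\langle A\hat\phi^{(i)},f\rangle=\lambda_k w_i$, that is $Bw=\lambda_k w$ with a strictly positive eigenvector.

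The heart of the argument, and the step I expect to be the main obstacle, is then to upgrade ``$Bw=\lambda_k w$ for one positive $w$'' to ``$\lambda_k$ is the \emph{top} eigenvalue of $B$''. This looks false at first glance, since the non-negative off-diagonal entries of $B$ push the Rayleigh quotients of mixed-sign combinations upward; this is exactly why sign graphs, rather than weak nodal domains, are the right objects, and why the continuous proof's unique continuation must be replaced by something combinatorial. I would set $Q=\lambda_k I-B$, a symmetric $Z$-matrix (non-positive off-diagonal) annihilating the positive vector $w$, and analyse it with Perron--Frobenius: on each connected component of the graph $\hat G$ on $\{1,\dots,m\}$ defined by $i\sim j\iff B_{ij}>0$, the matrix $-Q$ is irreducible with non-negative off-diagonal entries, so for large $t$ the matrix $-Q+tI$ is irreducible non-negative and its Perron eigenvector is the unique positive one. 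Since $w$ restricted to a component is positive and lies in the kernel of $Q$, it must be that Perron eigenvector, whose eigenvalue is therefore $0$; hence the smallest eigenvalue of $Q$ on each component is $0$, so $Q\succeq0$ and $B\preceq\lambda_k I$, as needed. Assembling the steps closes the proof.
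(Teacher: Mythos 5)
You should first know that the paper itself contains no proof of this statement: it is imported verbatim from the cited literature (\cite{Gladwell2001,Biyikoglu2004,Biyikoglu2007}, i.e.\ the discrete nodal domain theorems of Davies--Gladwell--Leydold--Stadler and its relatives), so there is no in-paper argument to compare yours against. Judged on its own, your proof is correct, and it is essentially the standard proof from that literature: restrict the eigenvector $f$ to its sign graphs to get an orthonormal family $\hat\phi^{(i)}$ spanning an $m$-dimensional space $U$; observe that the reduced matrix $B$ has non-negative off-diagonal entries because any edge between two distinct sign graphs must join oppositely signed vertices (same-signed adjacent vertices would merge their sign graphs); note $Bw=\lambda_k w$ with $w>0$; upgrade this by Perron--Frobenius to $\lambda_{\max}(B)=\lambda_k$, so that $\mathcal{R}\le\lambda_k$ on all of $U$; and conclude $\lambda_m\le\lambda_k<\lambda_{k+r}$, hence $m\le k+r-1$, by Courant--Fischer. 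Each step checks out, including the key Z-matrix step, which is a sound instance of the standard fact that a symmetric matrix with non-positive off-diagonal entries annihilating a strictly positive vector is positive semidefinite (your component-wise irreducibility argument works; one can even avoid the decomposition, since a non-negative matrix with a strictly positive eigenvector has that eigenvalue as its spectral radius, reducible or not). Two remarks. First, the paper's definition of a generalized Laplacian omits symmetry, without which the statement does not even guarantee real eigenvalues; your standing self-adjointness assumption is exactly what the cited sources build into the definition, and your observation that conjugation by $D^{1/2}$ preserves the off-diagonal sign pattern and the sign graphs correctly covers the normalized Laplacian, which is the only case the paper actually uses. Second, your argument never uses connectivity of $G$, so you have in fact proved slightly more than the statement as given.
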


In addition \cite{Biyikoglu2003} has studied the nodal domain theories on trees and even obtain equalities. But we have to study two cases
 
\begin{thm}[B{\i}y{\i}koglu]\label{Turker1}
Let $T$ be a tree, let $A$ be a generalized Laplacian of $T$. If $f$ is a $\lambda_k$-eigenvector without a vanishing coordinate (vertex where $f=0$), then $\lambda_k$ is simple and $f$ has exactly $k$ sign graphs.
\end{thm}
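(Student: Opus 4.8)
The plan is to read the statement as saying that a nowhere-zero eigenvector \emph{saturates} the Courant bound of the preceding theorem, and does so with a simple eigenvalue. First I would turn the conclusion into a counting problem. Since $f$ has no vanishing coordinate, every vertex lies in exactly one sign graph, so the sign graphs partition $V$, and two adjacent vertices lie in the same sign graph precisely when $f$ has equal signs on them. Deleting from $T$ the edges across which $f$ changes sign therefore breaks the tree into exactly the sign graphs; as a tree on $n$ vertices has $n-1$ edges, the number of sign graphs is $1+(\text{number of sign-change edges})$. The case of a single sign graph is immediate: then $f$ has constant sign, and after a global sign flip $f>0$ is a Perron vector of $cI-A$, a nonnegative irreducible matrix for large $c$, which forces $\lambda_k$ to be the smallest eigenvalue and simple, i.e. $k=1$ and exactly $1=k$ sign graph. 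So I may assume at least two sign graphs. For the upper bound I would simply quote the generalized nodal domain theorem stated above: if $\lambda_k$ has multiplicity $r$, then $f$ has at most $k+r-1$ sign graphs. It thus suffices to prove that $\lambda_k$ is simple and that $f$ has at least $k$ sign graphs; the two bounds then pin the count to exactly $k$.

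The engine for both remaining claims is a bottom-up elimination on the rooted tree, the discrete analogue of a Sturm sequence. After replacing $A$ by a symmetric matrix with the same spectrum and the same off-diagonal sign pattern---available through the positive diagonal similarity $D^{1/2}AD^{-1/2}$ for the (Dirichlet) normalized Laplacians to which we apply this, a similarity that scales each coordinate of an eigenvector by a positive number and so preserves all sign graphs---I would root $T$ at a leaf $\rho$ and perform symmetric Gaussian elimination of $A-\lambda_k I$ from the leaves upward. This produces one pivot per vertex,
\[
a_v=\bigl(A(v,v)-\lambda_k\bigr)-\sum_{c}\frac{A(v,c)\,A(c,v)}{a_c},
\]
the sum running over the children $c$ of $v$. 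By Sylvester's law of inertia applied to this tree-ordered congruence, the number of pivots with $a_v<0$ equals the number of eigenvalues of $A$ below $\lambda_k$, while the number of pivots with $a_v=0$ equals the multiplicity of $\lambda_k$.

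The crux is the dictionary between the pivots and the eigenvector $f$, and this is exactly where the nowhere-zero hypothesis is spent. I would show that for a non-root vertex $v$ the pivot $a_v$ is the Schur complement controlling $f$ on the subtree $T_v$, so that $a_v=0$ forces the coordinate $f(v)$ (or one below it) to vanish; since $f$ is nowhere zero, no non-root pivot can vanish, leaving the single unavoidable zero pivot at the root. Hence the multiplicity is exactly $1$, which is the simplicity claim. The same dictionary shows that the sign of a non-root pivot $a_v$ records whether $f$ keeps or reverses its sign across the edge from $v$ to its parent, so the negative pivots are in bijection with the sign-change edges. Since the eigenvalue is now known to be simple, the negative pivots number exactly $k-1$, and equating the two counts gives (number of sign-change edges) $=k-1$, hence exactly $k$ sign graphs.

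The hard part will be making this dictionary rigorous at vertices of degree greater than two, where the pivot fuses several subtree contributions at once. Unlike the classical tridiagonal (path) case, a single vanishing pivot no longer propagates a zero of $f$ along a line, and I must argue through branchings: if some non-root $a_v$ vanished, I would propagate the eigenvalue identity $\sum_{u\sim w}A(w,u)f(u)=(\lambda_k-A(w,w))f(w)$ outward from $v$, using $A(w,u)\neq 0$ on every edge, to force a coordinate of $f$ to be zero and contradict the hypothesis. Controlling this propagation through branch vertices---and thereby ruling out the degenerate step of the inertia recursion---is the essential obstacle, and it is precisely what makes the nodal count exact on trees while it fails on general graphs.
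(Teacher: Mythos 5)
The paper does not prove this statement at all: Theorem~\ref{Turker1} is imported verbatim from B{\i}y{\i}koglu's work \cite{Biyikoglu2003}, so there is no internal proof to compare against, and your proposal must be judged as an independent reconstruction. Judged that way, it is correct in outline: it is essentially Fiedler's classical argument for eigenvectors of symmetric acyclic matrices --- symmetrize by the positive diagonal similarity, eliminate from the leaves upward, and read the inertia of $A-\lambda_k I$ off the pivots. Moreover, the step you single out as ``the essential obstacle'' (branch vertices) is not actually an obstacle; it dissolves once the dictionary is organized as an induction from the leaves. For a non-root vertex $v$ with parent $p$, assuming the dictionary already holds at every child of $v$, substituting it into the eigenvalue equation at $v$ gives
\[
a_v\, f(v) \;=\; -A(v,p)\, f(p),
\]
no matter how many children $v$ has. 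Since $A(v,p)<0$ and $f$ is nowhere zero, this single identity yields everything at once: (i) $a_v\neq 0$, so the elimination never breaks down before the root and Sylvester's law of inertia legitimately applies to the tree-ordered congruence; (ii) $a_v<0$ exactly when $f$ changes sign across the edge $vp$; (iii) at the root the same substitution gives $a_\rho f(\rho)=0$, hence $a_\rho=0$. Thus the nullity is $1$ (simplicity), the number of eigenvalues below $\lambda_k$ equals the number of sign-change edges, simplicity makes that number $k-1$, and since non-root vertices are in bijection with edges, $f$ has exactly $k$ sign graphs. Two small corrections to your write-up: a vanishing pivot $a_v=0$ forces the \emph{parent} coordinate $f(p)$ to vanish, not $f(v)$ ``or one below it''; and once the inertia count is in hand, both your appeal to the Courant-type upper bound and your separate Perron--Frobenius treatment of the one-sign-graph case are redundant (though harmless). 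What your route buys, compared with the paper's bare citation, is a self-contained proof whose inertia bookkeeping also makes transparent why exactness holds on trees and fails on graphs with cycles; its only cost is the (here easily satisfied) requirement that $A$ be symmetric or symmetrizable, which the normalized and Dirichlet Laplacians of this paper are.
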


\begin{thm}[B{\i}y{\i}koglu]\label{Turker2}
Let $T$ be a tree, let $A$ be a generalized Laplacian of $T$. Let $\lambda$ be an eigenvalue of $A$ all of whose eigenvectors have at least one vanishing coordinate. Then
\begin{enumerate}
\item Eigenvectors of $\lambda$ have at least one common vanishing coordinate.
\item If $Z$ is the set of all common vanishing points, $G-Z$ is then a forest with components $T_1,\ldots,T_m$. Let $A_1,\ldots,A_m$ be the restriction of $A$ to $T_1,\ldots,T_m$, then $\lambda$ is a simple eigenvalue of $A_1,\ldots,A_m$, and $A_i$ has a $\lambda$-eigenvector without vanishing coordinates, for $i=1,\ldots,m$. 
\item Let $k_1,\ldots,k_m$ be the positions of $\lambda$ in the spectra of $A_1,\ldots,A_m$ in non-decreasing order. Then the number of sign graphs of an eigenvector of $\lambda$ is at most $k_1+\ldots+k_m$, and there exists a $\lambda$-eigenvector with $k_1+\ldots+k_m$ sign graphs.
\end{enumerate}
\end{thm}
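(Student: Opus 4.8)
The plan is to handle the three parts in sequence, reducing everything to the already-established Theorem~\ref{Turker1} together with one elementary linear-algebra observation. Write $E_\lambda$ for the $\lambda$-eigenspace of $A$ and, for each vertex $v$, let $H_v=\{f\in E_\lambda : f(v)=0\}$, a linear subspace of $E_\lambda$. The hypothesis that every eigenvector has a vanishing coordinate says precisely that $E_\lambda=\bigcup_{v\in V}H_v$, while a common vanishing coordinate is a vertex $v$ with $H_v=E_\lambda$. For part~1 I would argue by contradiction: if no $H_v$ equalled $E_\lambda$, then $E_\lambda$ would be a finite union of proper subspaces, which is impossible for a nonzero vector space over the infinite field of real numbers. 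Hence some $H_v=E_\lambda$, giving a common vanishing vertex. Note that this step uses no tree structure at all.

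For part~2, first observe that removing the vertex set $Z$ from a tree leaves a forest, so $T-Z$ splits into trees $T_1,\dots,T_m$, and each $A_i$ is again a generalized Laplacian (a principal submatrix of $A$). The key separation fact is that any $\lambda$-eigenvector $f$ of $A$ vanishes on $Z$, and every edge leaving $T_i$ lands in $Z$; hence the eigenvalue equation restricted to $T_i$ loses no terms, and $f|_{T_i}$ satisfies $A_i(f|_{T_i})=\lambda(f|_{T_i})$. The restrictions therefore form a subspace $W_i$ of the $\lambda$-eigenspace of $A_i$. Because $Z$ collects exactly the common zeros, for each $w\in T_i$ some $f$ has $f(w)\neq 0$, so $W_i$ has no common vanishing vertex in $T_i$. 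Applying the union-of-subspaces argument inside $W_i$ produces a $\lambda$-eigenvector $g_i$ of $A_i$ with no vanishing coordinate; Theorem~\ref{Turker1} then forces $\lambda$ to be simple for $A_i$, so $W_i$ is the whole (one-dimensional) $\lambda$-eigenspace of $A_i$, spanned by $g_i$.

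For part~3, write $f|_{T_i}=c_i(f)\,g_i$. Since $f$ vanishes on $Z$ and $Z$ disconnects the $T_i$, every sign graph of $f$ lies inside a single $T_i$, so the sign graphs of $f$ are the disjoint union of those of the restrictions $f|_{T_i}$. When $c_i(f)\neq 0$ the restriction is a nonzero multiple of the non-vanishing $g_i$, which by Theorem~\ref{Turker1} has exactly $k_i$ sign graphs (here $k_i$ is the position of $\lambda$ in the spectrum of $A_i$); when $c_i(f)=0$ it contributes none. This yields the upper bound $\sum_i k_i$. To realize it, I would show that the linear map $f\mapsto(c_1(f),\dots,c_m(f))$ has image contained in no coordinate hyperplane $\{x_i=0\}$ (again because some eigenvector is nonzero at a chosen vertex of $T_i$), so that by one more application of the finite-union principle there is an $f$ with every $c_i(f)\neq 0$, giving exactly $\sum_i k_i$ sign graphs.

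The conceptual content all sits in part~2: recognizing that $Z$ cleanly decouples the eigenvalue equation into independent Dirichlet problems on the $T_i$, and that ``no common zero'' upgrades---via the elementary fact that a real vector space is never a finite union of proper subspaces---to ``an eigenvector with no zero at all,'' which is exactly the hypothesis of Theorem~\ref{Turker1}. I expect the main obstacle to be largely bookkeeping: verifying that each $A_i$ is a genuine generalized Laplacian of $T_i$ and that no sign graph straddles $Z$, so that the sign-graph counts add \emph{exactly} rather than merely bound one another.
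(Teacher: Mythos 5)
Theorem~\ref{Turker2} is not proved in the paper at all: it is imported verbatim from \cite{Biyikoglu2003} as a known ingredient, so there is no in-paper argument to compare yours against. Judged on its own, your proof is correct and complete. Part~1 follows from the fact that a nonzero real vector space is never a finite union of proper subspaces, applied to the subspaces $H_v=\{f\in E_\lambda: f(v)=0\}$; as you note, this needs no tree structure and holds for any graph and any matrix, which is slightly more general than the statement. In part~2 you correctly check the two decoupling facts (every edge leaving a component $T_i$ of $T-Z$ ends in $Z$, and $f\equiv 0$ on $Z$), so restriction maps $E_\lambda$ linearly into the $\lambda$-eigenspace of each $A_i$; rerunning the finite-union argument inside the image $W_i$ (which is nonzero and has no common zero, precisely because $Z$ is \emph{all} of the common zeros) produces a nowhere-vanishing eigenvector of $A_i$, and then Theorem~\ref{Turker1} gives simplicity and the count $k_i$. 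Part~3 (sign graphs of $f$ do not meet $Z$, hence partition over the $T_i$, giving the bound $\sum_i k_i$; one more finite-union argument yields an $f$ with all coefficients $c_i(f)\neq 0$, attaining the bound) is also sound. In effect you have made the paper self-contained: all tree-specific content is localized in Theorem~\ref{Turker1}, and everything else in Theorem~\ref{Turker2} is elementary linear algebra plus the observation that $Z$ splits the eigenvalue equation into independent Dirichlet problems on the components --- which is also the conceptual reading the paper itself relies on when it applies this theorem in the proof of Theorem~\ref{SgnGrph2}.
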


In this theorem, if $A$ is the normalized Laplacian, the $A_i$ in the second item are in fact the Dirichlet normalized Laplacians on $T_i$. 

We denote by $\lambda_p$ the largest eigenvalue of the normalized Laplacian smaller than 1. We are interested in its eigenvectors.

Figure 4 shows a typical $\lambda_p$-eigenvector.
\begin{figure}[ht]
  \centering
  \includegraphics[scale=0.3]{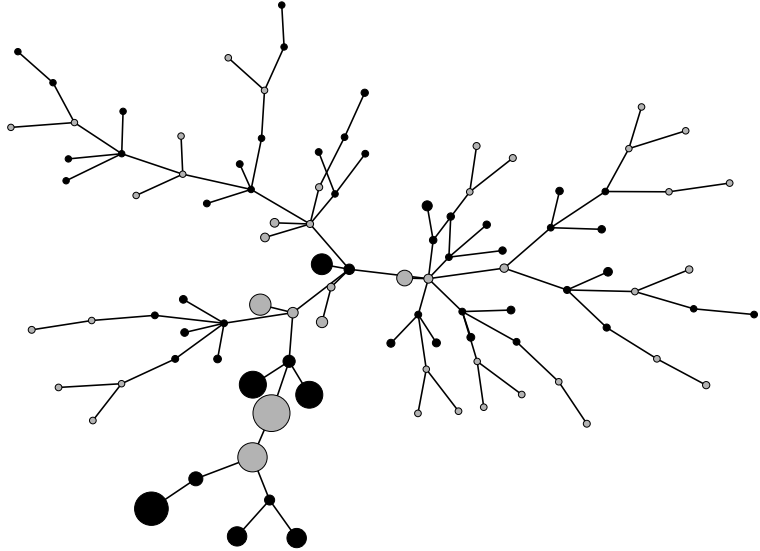}
  \caption{A typical $\lambda_p$-eigenvector.}
\end{figure}
We observe that vertices from a minimum vertex cover can be regarded as representatives for the sign graphs. This is also seen in Figures 5 and 6.
\begin{figure}[ht]
  \centering
  \includegraphics[scale=0.3]{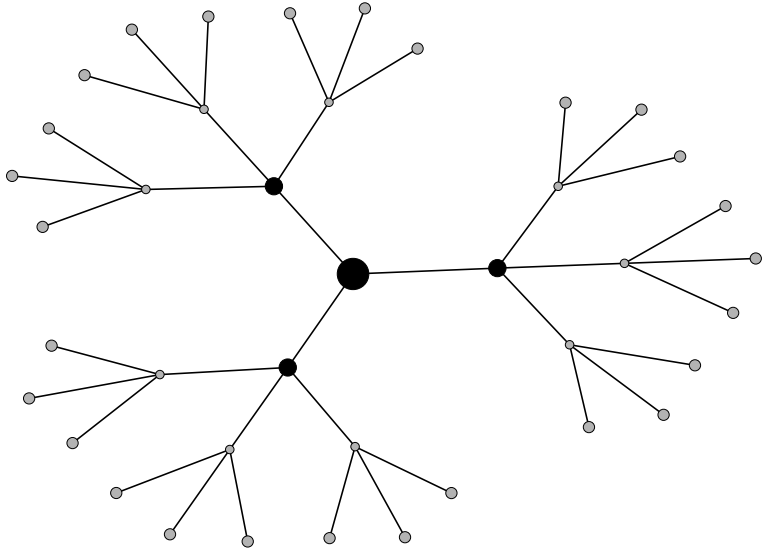}
  \caption{A typical $\lambda_p$-eigenvector without vanishing point.}
\end{figure}
\begin{figure}[ht]
  \centering
  \includegraphics[scale=0.3]{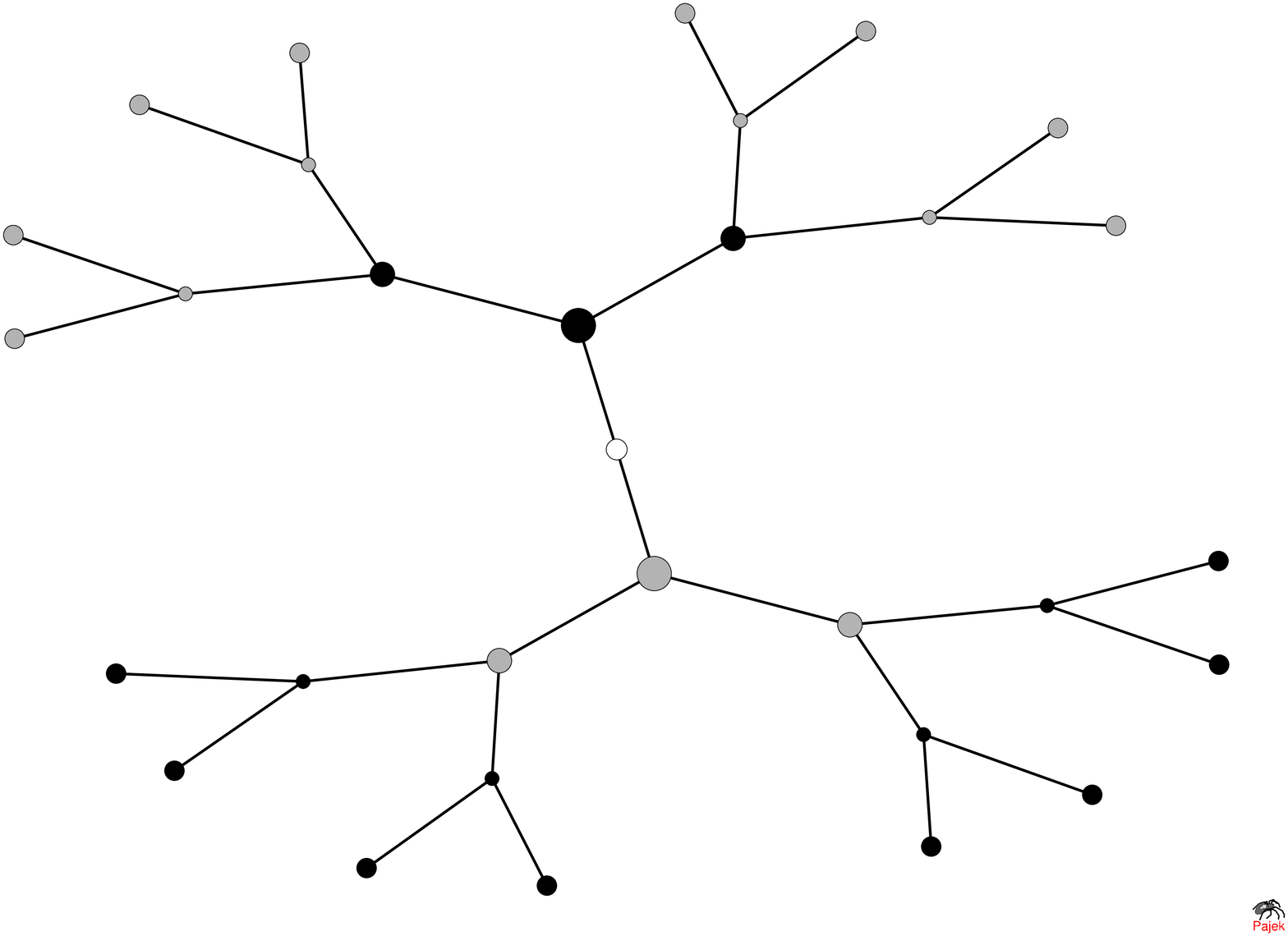}
  \caption{A typical $\lambda_p$-eigenvector with a vanishing point (in the middle).}
\end{figure}
With or without vanishing points, every sign graph contains one and only one vertex from a minimum vertex cover. 
As in \cite{Biyikoglu2003}, this observation is proved as a theorem by considering two different cases: with (Figure 6) or without (Figure 5) vanishing points.

\begin{thm}\label{SgnGrph1}
Let $T=(V,E)$ be a tree, let $C$ be a minimum vertex cover on $T$, we have $\lambda_p=\lambda_{|C|}$. If $f$ is a $\lambda_p$-eigenvector without vanishing coordinate, then $\lambda_p$ is a simple eigenvalue, and each of the $|C|$ sign graphs of $f$ contains \emph{one and only one} element $c\in C$, i.e. $C$ is a transversal of the sign graphs.
\end{thm}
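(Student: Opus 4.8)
The plan is to proceed in three stages: first locate $\lambda_p$ precisely in the ordered spectrum, then read off the number of sign graphs from Bıyıkoğlu's theorem, and finally match sign graphs to vertices of $C$ by a sign argument together with a counting step. To begin, I would show that $\lambda_p=\lambda_{|C|}$. Since a tree is bipartite, its spectrum is symmetric about $1$, so the eigenvalues strictly below $1$ and those strictly above $1$ are equal in number. By Theorem~\ref{mul1} the multiplicity of $1$ is $n-2|C|$, so exactly $2|C|$ eigenvalues differ from $1$; by the symmetry, $|C|$ of these lie strictly below $1$ and $|C|$ strictly above. Hence in the non-decreasing labeling the eigenvalues strictly below $1$ occupy positions $1$ through $|C|$, the value $1$ occupies positions $|C|+1$ through $n-|C|$, and the eigenvalues above $1$ occupy the top $|C|$ positions. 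In particular the largest eigenvalue below $1$ is $\lambda_{|C|}$, i.e. $\lambda_p=\lambda_{|C|}$ and $p=|C|$.

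Next, since the normalized Laplacian is a generalized Laplacian and $f$ is a $\lambda_p$-eigenvector with no vanishing coordinate, Theorem~\ref{Turker1} applies with $k=p=|C|$: it yields at once that $\lambda_p$ is simple and that $f$ has exactly $|C|$ sign graphs. Because $f$ never vanishes, the sign graphs partition all of $V$, so each vertex of $C$ lies in exactly one sign graph. This defines a map $\phi$ from $C$ to the set of sign graphs, and the remaining task is to show $\phi$ is a bijection; as both sets have cardinality $|C|$, it suffices to prove $\phi$ is surjective, i.e. that every sign graph contains at least one vertex of $C$.

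The crux, and the step I expect to be the main obstacle, is ruling out a sign graph that avoids $C$ entirely. Suppose a sign graph $S$ contained no vertex of $C$. Then $S\subset V-C$, and since $V-C$ is an independent set (being the complement of a vertex cover) while $S$ is connected, $S$ must reduce to a single vertex $\{v\}$ with $v\in V-C$, all of whose neighbors lie in $C$ and, by maximality together with the absence of zeros, carry the sign opposite to $f(v)$. Here I would exploit that $\lambda_p<1$: the eigenvalue equation at $v$ gives $\frac{1}{\deg{v}}\sum_{u\sim v}f(u)=(1-\lambda_p)f(v)$, and since $1-\lambda_p>0$ and $\deg{v}>0$ the sum $\sum_{u\sim v}f(u)$ must share the sign of $f(v)$, contradicting that every neighbor of $v$ has the opposite sign. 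Hence no such $S$ exists, $\phi$ is surjective, and therefore a bijection, so each of the $|C|$ sign graphs contains one and only one vertex of $C$; that is, $C$ is a transversal of the sign graphs, which is the claim.
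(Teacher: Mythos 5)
Your proof is correct and follows essentially the same route as the paper's: locate $\lambda_p=\lambda_{|C|}$ via the spectral symmetry of the bipartite tree together with Theorem \ref{mul1}, invoke Theorem \ref{Turker1} to get simplicity and exactly $|C|$ sign graphs, and then combine the independence of $V-C$ with the eigenvalue equation at a would-be singleton sign graph to force each sign graph to meet $C$, finishing by counting. The only cosmetic difference is the order of the final two ingredients (the paper first rules out singleton sign graphs and then applies independence, whereas you first use independence to reduce a $C$-avoiding sign graph to a singleton and then derive the contradiction), which does not change the argument.
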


\begin{proof}
It is immediate by the symmetry and Theorem \ref{mul1} that $\lambda_{|C|}<1$ and $\lambda_{|C|+1}\ge 1$. It is concluded
from B{\i}y{\i}koglu's Theorem \ref{Turker1} that $\lambda_p$ is simple, and a $\lambda_p$-eigenvector has $|C|$ sign graphs since it has no vanishing coordinate. We study the $\lambda_p$-eigenvector $f$.

We now prove that every sign graph of $f_p$ has at least 2 vertices. Otherwise, there will be a sign graph with only one vertex $v$, all of whose neighbors have an opposite sign, so $\mathcal{L}f(v)>f(v)$, which is not possible since $\lambda_p<1$.

We conclude that every sign graph contains at least one element of $C$, because $V-C$ is an independent set. Since there are exactly $|C|$ sign graphs, the only way to achieve this is to put exactly one element of $C$ in each sign graph.
\end{proof}

Now let's consider the case with vanishing coordinates, and prove the final theorem:

\begin{thm}\label{SgnGrph2}
Let $T=(V,E)$ be a tree, and $C$ be a minimum vertex cover.
\begin{enumerate}
  \item A $\lambda_p$-eigenvector has at most $|C|$ sign graphs, and there
  exists a $\lambda_p$-eigenvector with exactly $|C|$ sign graphs.
  \item Every sign graph of a $\lambda_p$-eigenvector contains one and only one
  element of $C$.
\end{enumerate}
\end{thm}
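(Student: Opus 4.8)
The plan is to separate the two cases announced before the statement and, in the hard case, to reduce everything component-by-component to Theorem~\ref{SgnGrph1} via B\i y\i ko\u{g}lu's Theorem~\ref{Turker2}. First I would dispose of the upper bound in item~1, which is case-free. If some sign graph consisted of a single vertex $v$, then every neighbour $u$ of $v$ would have $f(u)=0$ or $f(u)$ of sign opposite to $f(v)$, forcing $|\mathcal{L}f(v)|\ge|f(v)|$ and hence $\lambda_p\ge 1$, a contradiction. So every sign graph has at least two vertices, hence at least one edge, hence at least one vertex of $C$ (as $C$ is a vertex cover and $V-C$ is independent). Since distinct sign graphs are vertex-disjoint, any $\lambda_p$-eigenvector has at most $|C|$ sign graphs.

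Next I would set up the reduction. If $\lambda_p$ has an eigenvector without vanishing coordinate, Theorem~\ref{SgnGrph1} already gives simplicity, exactly $|C|$ sign graphs, and one element of $C$ in each, so item~2 and the equality case of item~1 hold. Otherwise every $\lambda_p$-eigenvector vanishes somewhere, and I apply Theorem~\ref{Turker2} to $\mathcal{L}$ and $\lambda_p$: let $Z$ be the common vanishing set, $T-Z=T_1\sqcup\cdots\sqcup T_m$ the components, and $A_i=\mathcal{L}_{T_i}$ the Dirichlet normalized Laplacians, each of which has $\lambda_p$ as a simple eigenvalue with a non-vanishing eigenvector $g_i$. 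The engine of the proof is that every $\lambda_p$-eigenvector $f$ of $\mathcal{L}$ vanishes on $Z$, so for $v\in T_i$ the missing neighbours of $v$ lie in $Z$ and contribute nothing; thus $A_i(f|_{T_i})=\lambda_p\,f|_{T_i}$, and by simplicity $f|_{T_i}=c_i g_i$ for a scalar $c_i$. Consequently the sign graphs of $f$ are exactly the sign graphs of the $g_i$ over those $i$ with $c_i\ne 0$, so it suffices to understand a single non-vanishing $g_i$ on $T_i$.

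For the per-component analysis two inputs are routine. The determinant-over-matchings computation of Theorem~\ref{mul1} applies verbatim to $A_i$, whose diagonal is still $1$ and whose underlying graph $T_i$ is a tree, so the multiplicity of $1$ in $A_i$ is $n_i-2|C_i|$ with $C_i$ a minimum vertex cover of $T_i$; and the spectrum of $A_i$ is symmetric about $1$, since bipartiteness of $T_i$ lets one flip signs on one colour class irrespective of the external degrees appearing in $A_i$. Together these show $A_i$ has exactly $|C_i|$ eigenvalues below $1$, so its largest eigenvalue below $1$ occupies position $|C_i|$. Granting that $\lambda_p$ is this largest eigenvalue, Theorem~\ref{Turker1} applied to $A_i$ gives that $g_i$ has exactly $|C_i|$ sign graphs; the single-vertex argument forces each to contain at least one vertex of $C_i=C\cap V(T_i)$, and since the $|C_i|$ disjoint sign graphs of the non-vanishing $g_i$ partition $V(T_i)$, each contains exactly one. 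Feeding this back through the restriction $f|_{T_i}=c_ig_i$ proves item~2 for an arbitrary $f$, and taking the eigenvector from Theorem~\ref{Turker2}(3) with all $c_i\ne 0$ (together with $C\cap Z=\emptyset$, so $\sum_i|C_i|=|C|$) realises the value $|C|$ in item~1.

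The main obstacle is exactly the two facts I granted: that $\lambda_p$ is the \emph{largest} eigenvalue below $1$ of every $A_i$ (equivalently, no $A_i$ has an eigenvalue in $(\lambda_p,1)$), and that $C\cap V(T_i)$ is a minimum vertex cover of $T_i$ with $C\cap Z=\emptyset$, whence $\sum_i|C_i|=|C|$ by Properties~\ref{PDeletion1} and~\ref{PExpand}. The easy halves are clear: the single-vertex argument gives $k_i\le|C\cap V(T_i)|$, and Cauchy interlacing between $\mathcal{L}$ and its principal submatrix $\bigoplus_i A_i$ (delete the $|Z|$ rows and columns of $Z$) gives $\sum_i|C_i|\le|C|$. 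What interlacing alone does \emph{not} rule out is a principal block $A_i$ having a sub-$1$ eigenvalue exceeding the global $\lambda_p$, and closing this gap is where the defining property of $Z$ as the common vanishing set of the whole $\lambda_p$-eigenspace must enter. The route I would take is by contradiction: if some $A_i$ had an eigenvalue $\rho\in(\lambda_p,1)$, I would assemble the corresponding block eigenvectors, extend them by $0$ on $Z$, and solve the $|Z|$ linear ``boundary'' equations $\sum_{u\sim z,\,u\notin Z} h(u)=0$ at the vertices of $Z$ to manufacture a genuine $\mathcal{L}$-eigenvector of eigenvalue $\rho\in(\lambda_p,1)$, contradicting the maximality of $\lambda_p$ among eigenvalues below $1$. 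Making this dimension count rigorous — guaranteeing enough freedom in the block eigenspaces to satisfy the $Z$-equations, which is precisely where the minimality of the common vanishing set $Z$ is used — is the delicate step, and the same bookkeeping simultaneously delivers $\sum_i|C_i|=|C|$ and $C\cap Z=\emptyset$.
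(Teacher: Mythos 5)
You have the right skeleton --- it is in fact the same as the paper's: dispose of the non-vanishing case via Theorem~\ref{SgnGrph1}, and otherwise use Theorem~\ref{Turker2} to restrict every $\lambda_p$-eigenvector to the components $T_i$ of $T-Z$ and apply the per-component analysis. But the two facts you ``grant'' (that $\lambda_p$ is the \emph{largest} below-$1$ eigenvalue of every Dirichlet block $A_i$, and that $C\cap Z=\emptyset$ so that $C$ splits into minimum vertex covers $C_i$ of the $T_i$) are not side conditions; they are the entire content of the theorem beyond Theorem~\ref{SgnGrph1}, and your proposal leaves them unproved. Worse, the route you sketch for closing the gap points the wrong way: you propose to take a hypothetical block eigenvalue $\rho\in(\lambda_p,1)$, extend block eigenvectors by zero on $Z$, and ``solve the $|Z|$ boundary equations'' $\sum_{u\sim z}h(u)=0$ to manufacture a $\rho$-eigenvector of $\mathcal{L}$. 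There is no dimension count that makes this work: $\rho$ may be an eigenvalue of a single block $A_i$ with a one-dimensional eigenspace, so the only freedom is an overall scaling, which cannot satisfy even one nontrivial boundary equation. Interlacing, as you note, cannot rule out such a $\rho$ either, so your argument has no mechanism left to exclude it.

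The paper closes the gap in the opposite direction, and this is the idea you are missing: it first proves the lemma that \emph{a common vanishing point $z\in Z$ lies outside every minimum vertex cover}. The proof is by contradiction through a multiplicity count, not a construction. If $z$ belonged to some minimum vertex cover $C$, then $C-z$ is a minimum vertex cover of $T-z$ (Property~\ref{PDeletion1}), so the Dirichlet version of Theorem~\ref{mul1} plus symmetry of the spectrum shows $\mathcal{L}_{T-z}$ has only $|C|-1$ eigenvalues below $1$; Cauchy interlacing between $\mathcal{L}$ and $\mathcal{L}_{T-z}$ then forces the multiplicity of $\lambda_p$ in $\mathcal{L}_{T-z}$ to be \emph{at most} its multiplicity in $\mathcal{L}$. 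On the other hand, since every $\lambda_p$-eigenvector of $\mathcal{L}$ already vanishes at $z$, passing to $\mathcal{L}_{T-z}$ deletes the eigenvalue equation at $z$ from the system while keeping all solutions, so the multiplicity strictly \emph{increases} --- contradiction. With $z\notin$ every minimum vertex cover, Property~\ref{PDeletion2} (which your proposal never invokes) says $C$ itself remains a minimum vertex cover of $T-z$, so the below-$1$ eigenvalue count stays $|C|$, and the same interlacing now pins the enlarged $\lambda_p$-multiplicity so that $\lambda_p$ is still the largest below-$1$ eigenvalue after the deletion. Iterating over all of $Z$ yields simultaneously $C\cap Z=\emptyset$, $\sum_i|C_i|=|C|$, and $\lambda_p=\lambda^{(i)}_{|C_i|}$ for every block --- exactly your two granted facts --- after which your per-component argument (which is sound) finishes the proof.
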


\begin{proof}
Only the case where all $\lambda_p$-eigenvectors have at least one vanishing coordinate remains to be proved. From Theorem \ref{Turker2}, we know that the $\lambda_p$-eigenvectors have at least one common vanishing coordinate.

Firstly, by the same method as for the normalized Laplacian, we can prove that Theorems \ref{mul1}, \ref{0pt1}, \ref{SgnGrph1} are also true for a Dirichlet normalized Laplacian.

As in the case without vanishing coordinates, we conclude from Theorem \ref{mul1} that $\lambda_p=\lambda_{|C|}$. Let $z$ be a common vanishing coordinate of the $\lambda_p$-eigenvectors, $\lambda_p$ is also an eigenvalue of the Dirichlet normalized Laplacian $\mathcal{L}_{T-z}$. The matrix form of $\mathcal{L}_{T-z}$ can be obtained by removing from $\mathcal{L}$ the row and the column corresponding to $z$, so the eigenvalues of $\mathcal{L}_{T-z}$ interlace the eigenvalues of $\mathcal{L}$ (see \cite{Haemers1995}).

We would like to prove that $z$ is not in any minimum vertex cover. Otherwise, assume a minimum vertex cover $C\ni z$. By Property \ref{PDeletion1}, $C-z$ is a minimum vertex cover of $T-z$. Applying Theorem \ref{mul1} to $\mathcal{L}_{T-z}$, we know that $\lambda^D_{|C|-1}<\lambda^D_{|C|}=1$, where $\lambda^D_1,\ldots,\lambda^D_{n-1}$ are the eigenvalues of $\mathcal{L}_{T-z}$ in non-decreasing order. By the interlacing argument, we conclude that $\lambda^D_{|C|-1}=\lambda_p$, and that the multiplicity of $\lambda_p$ in the spectrum of $\mathcal{L}_{T-z}$ is at most the same as in the spectrum of $\mathcal{L}$. 

This is however not possible if we look at the Laplacian equations with eigenvalue $\lambda_p$. After deleting the vertex $z$ from $T$, the Laplacian equation at vertex $z$ is eliminated from the equation system, thus the $\lambda_p$-eigenspace obtains one more dimension, which means that the multiplicity of $\lambda_p$ should be higher in the spectrum of $\mathcal{L}_{T-z}$ then in the spectrum of $\mathcal{L}$. Therefore, $z$ cannot be in any minimum vertex cover.

By Property \ref{PDeletion2}, $C$ is a minimum vertex cover of $T-z$. Let $z'$ be another common vanishing point of $\lambda_p$-eigenvectors of $\mathcal{L}$, it is obvious that it's also a common vanishing point of $\lambda_p$-eigenvectors of $\mathcal{L}_{T-z}$, so we can divide $T$ into a forest by deleting one by one all the common vanishing points, and finally conclude by applying Theorems \ref{SgnGrph1} and \ref{Turker2} to every single tree in the forest.
\end{proof}

Actually, this result is very intuitive. The minimum vertex covers try to cover the graph in a most efficient way, while the sign graphs try to divide the graph in a most uniform way. 
%Unfortunately it is not true for generalized Laplacian.

%\section{Possibilities of generalization}

%We have observed in trees a very strong relationship between minimum vertex cover and center area (around 1) of the spectrum of normalized Laplacian. We hope this relationship can be generalized to graphs with cycles, but we have immediately counterexamples like $4k$-cycles. However the results in this paper can be inspiring for further studies. If something else in general graphs is found to be strongly related to the neighborhood of eigenvalue 1, it should somehow be able to degenerated in trees to minimum vertex cover.

%One possibility of generalization is to study the 1- and pre-1-eigenvectors. In a general graph, not all 1-eigenvectors vanish on a minimum vertex cover, but there must exist 1-eigenvectors whose vanishing points form a vertex cover. The coupling between sign graph structure and minimum vertex covers seems quite nature, and is a generalisation to general graphs is very possible.

%Another possibility is to look at the ``motif doubling'', proposed by \cite{Banerjee2008b} as an explanation for the high multiplicity of 1. The ``motif doubling'' is naturally found in trees as ``branching''. If a relation is established between the minimum vertex cover and the branching process, a generalisation for graphs with cycles is potentially to be found.

\section*{Acknowledgment}
We thank Frank Bauer for very helpful discussions, and the referee for his or her suggestions and careful review. The figures in this paper are generated by Pajek, a program for analyzing graphs.
%\section*{References}

\bibliographystyle{model1-num-names}
\bibliography{References}

\begin{thebibliography}{11}
\expandafter\ifx\csname natexlab\endcsname\relax\def\natexlab#1{#1}\fi
\providecommand{\bibinfo}[2]{#2}
\ifx\xfnm\relax \def\xfnm[#1]{\unskip,\space#1}\fi
%Type = Article
\bibitem[{Banerjee and Jost(2008{\natexlab{a}})}]{Banerjee2008a}
\bibinfo{author}{A.~Banerjee}, \bibinfo{author}{J.~Jost},
\newblock \bibinfo{title}{Spectral plot properties: towards a qualitative
  classification of networks},
\newblock \bibinfo{journal}{Netw. Heterog. Media} \bibinfo{volume}{3}
  (\bibinfo{year}{2008}{\natexlab{a}}) \bibinfo{pages}{395--411}.
%Type = Article
\bibitem[{Banerjee and Jost(2008{\natexlab{b}})}]{Banerjee2008b}
\bibinfo{author}{A.~Banerjee}, \bibinfo{author}{J.~Jost},
\newblock \bibinfo{title}{On the spectrum of the normalized graph {L}aplacian},
\newblock \bibinfo{journal}{Linear Algebra Appl.} \bibinfo{volume}{428}
  (\bibinfo{year}{2008}{\natexlab{b}}) \bibinfo{pages}{3015--3022}.
%Type = Article
\bibitem[{Jost(2006)}]{Jost2006}
\bibinfo{author}{J.~Jost},
\newblock \bibinfo{title}{{Mathematical Methods in Biology and Neurobiology}},
\newblock \bibinfo{journal}{Lecture notes given at ENS,
  \url{http://www.mis.mpg.de/jjost/publications/mathematical_methods.pdf}}
  (\bibinfo{year}{2006}).
%Type = Inbook
\bibitem[{Chung(1997)}]{Chung1997}
\bibinfo{author}{F.~R.~K. Chung}, \bibinfo{title}{Spectral graph theory},
  volume~\bibinfo{volume}{92} of \textit{\bibinfo{series}{CBMS Regional
  Conference Series in Mathematics}}, \bibinfo{publisher}{Published for the
  Conference Board of the Mathematical Sciences, Washington, DC}, pp.
  \bibinfo{pages}{xii+207}.
%Type = Article
\bibitem[{Grigoryan(2009)}]{Grigoryan2009}
\bibinfo{author}{A.~Grigoryan},
\newblock \bibinfo{title}{{Analysis on Graphs}},
\newblock \bibinfo{journal}{Lecture Notes at University of Bielefeld,
  \url{http://www.math.uni-bielefeld.de/~grigor/aglect.pdf}}
  (\bibinfo{year}{2009}).
%Type = Article
\bibitem[{Mowshowitz(1972)}]{Mowshowitz1972}
\bibinfo{author}{A.~Mowshowitz},
\newblock \bibinfo{title}{The characteristic polynomial of a graph},
\newblock \bibinfo{journal}{J. Combinatorial Theory Ser. B}
  \bibinfo{volume}{12} (\bibinfo{year}{1972}) \bibinfo{pages}{177--193}.
%Type = Article
\bibitem[{Haemers(1995)}]{Haemers1995}
\bibinfo{author}{W.~H. Haemers},
\newblock \bibinfo{title}{Interlacing eigenvalues and graphs},
\newblock \bibinfo{journal}{Linear Algebra Appl.} \bibinfo{volume}{226/228}
  (\bibinfo{year}{1995}) \bibinfo{pages}{593--616}.
%Type = Article
\bibitem[{Davies et~al.(2001)Davies, Gladwell, Leydold, and
  Stadler}]{Gladwell2001}
\bibinfo{author}{E.~B. Davies}, \bibinfo{author}{G.~M.~L. Gladwell},
  \bibinfo{author}{J.~Leydold}, \bibinfo{author}{P.~F. Stadler},
\newblock \bibinfo{title}{Discrete nodal domain theorems},
\newblock \bibinfo{journal}{Linear Algebra Appl.} \bibinfo{volume}{336}
  (\bibinfo{year}{2001}) \bibinfo{pages}{51--60}.
%Type = Article
\bibitem[{B{\i}y{\i}ko\u{g}lu et~al.(2004)B{\i}y{\i}ko\u{g}lu, Hordijk,
  Leydold, Pisanski, and Stadler}]{Biyikoglu2004}
\bibinfo{author}{T.~B{\i}y{\i}ko\u{g}lu}, \bibinfo{author}{W.~Hordijk},
  \bibinfo{author}{J.~Leydold}, \bibinfo{author}{T.~Pisanski},
  \bibinfo{author}{P.~F. Stadler},
\newblock \bibinfo{title}{Graph {L}aplacians, nodal domains, and hyperplane
  arrangements},
\newblock \bibinfo{journal}{Linear Algebra Appl.} \bibinfo{volume}{390}
  (\bibinfo{year}{2004}) \bibinfo{pages}{155--174}.
%Type = Book
\bibitem[{B{\i}y{\i}ko\u{g}lu et~al.(2007)B{\i}y{\i}ko\u{g}lu, Leydold, and
  Stadler}]{Biyikoglu2007}
\bibinfo{author}{T.~B{\i}y{\i}ko\u{g}lu}, \bibinfo{author}{J.~Leydold},
  \bibinfo{author}{P.~F. Stadler}, \bibinfo{title}{Laplacian eigenvectors of
  graphs}, volume \bibinfo{volume}{1915} of \textit{\bibinfo{series}{Lecture
  Notes in Mathematics}}, \bibinfo{publisher}{Springer},
  \bibinfo{address}{Berlin}, \bibinfo{year}{2007}.
  \bibinfo{note}{Perron-Frobenius and Faber-Krahn type theorems}.
%Type = Article
\bibitem[{B{\i}y{\i}ko\u{g}lu(2003)}]{Biyikoglu2003}
\bibinfo{author}{T.~B{\i}y{\i}ko\u{g}lu},
\newblock \bibinfo{title}{A discrete nodal domain theorem for trees},
\newblock \bibinfo{journal}{Linear Algebra Appl.} \bibinfo{volume}{360}
  (\bibinfo{year}{2003}) \bibinfo{pages}{197--205}.

\end{thebibliography}

\end{document}